\documentclass[11pt,a4paper]{amsart}
\usepackage[margin=1in]{geometry}
\usepackage[utf8]{inputenc}
\usepackage{amsmath}
\usepackage{amsfonts}
\usepackage{xypic}
\usepackage{amsthm}
\usepackage{mathrsfs}
\usepackage{amssymb}
\usepackage[all]{xy}
\usepackage{color}

\usepackage{hyperref}
\usepackage{graphicx}


\numberwithin{equation}{section}
\newtheorem{theorem}{Theorem}[section]
\newtheorem{lemma}[theorem]{Lemma}
\newtheorem{proposition}[theorem]{Proposition}

\theoremstyle{remark}
\newtheorem{remark}[theorem]{Remark}

\providecommand{\customgenericname}{}
\newcommand{\newcustomtheorem}[2]{%
  \newenvironment{#1}[1]
  {%
   \renewcommand\customgenericname{#2}%
   \renewcommand\theinnercustomgeneric{##1}%
   \innercustomgeneric
  }
  {\endinnercustomgeneric}
}

\newcustomtheorem{customthm}{Theorem}
\newcustomtheorem{customprop}{Proposition}
\newcustomtheorem{customlemma}{Lemma}

\newtheoremstyle{TheoremNum}
        {\topsep}{\topsep}              
        {\itshape}                      
        {}                              
        {\bfseries}                     
        {.}                             
        { }                             
        {\thmname{#1}\thmnote{ \bfseries #3}}
    \theoremstyle{TheoremNum}

\DeclareMathOperator{\Hom}{Hom}
\DeclareMathOperator{\Aut}{Aut}

\newcommand{\Spec}{\mathrm{Spec}}

\newcommand{\Obs}{\mathrm{Obs}}

\newcommand{\End}{\mathrm{End}}
\newcommand{\Ext}{\mathrm{Ext}}

\newcommand{\ad}{\mathrm{ad}}

\newcommand{\id}{\mathbf{1}}

\newcommand{\image}{\mathrm{Im}}

\newcommand{\Art}{\mathbf{(Art)}}

\newcommand{\FinVect}{\mathbf{(FinVect)}}
\newcommand{\Sets}{\mathbf{(Sets)}}
\newcommand{\alg}{\mathrm{alg}}
\newcommand{\Def}{\mathrm{Def}}
\newcommand{\Coh}{\mathrm{Coh}}

\newcommand{\Ort}{\mathrm{O}}
\newcommand{\Sp}{\mathrm{Sp}}

\newcommand{\Dd}{\mathcal{D}}
\newcommand{\Ee}{\mathcal{E}}
\newcommand{\Ff}{\mathcal{F}}

\newcommand{\Mm}{\mathcal{M}}

\newcommand{\Oo}{\mathcal{O}}

\newcommand{\Vv}{\mathcal{V}}
\newcommand{\Ww}{\mathcal{W}}

\newcommand{\mM}{\mathfrak{m}}

\newcommand{\D}{\mathbf{D}}
\newcommand{\F}{\mathbf{F}}
\renewcommand{\H}{\mathrm{H}}

\renewcommand{\S}{\mathbf{S}}



\newcommand{\HH}{\mathbb{H}}

\newcommand{\quotient}[2]{{\raisebox{.2em}{\thinspace $#1$}\left / \raisebox{-.15em}{ $#2$}\right.}}

\newcommand\Quotient[2]{
        \mathchoice
            {
                \text{\raise1ex\hbox{\thinspace $#1$}\Big{/} \lower1ex\hbox{$#2$} \thinspace}%
            }
            {
                #1\,/\,#2
            }
            {
                #1\,/\,#2
            }
            {
                #1\,/\,#2
            }
    }

\newcommand\GIT[2]{
        \mathchoice
            {
                \text{\raise1ex\hbox{\thinspace $#1$}\Big{/}\!\!\!\!\Big{/} \lower1ex\hbox{$#2$} \thinspace}%
            }
            {
                #1\,/\,#2
            }
            {
                #1\,/\,#2
            }
            {
                #1\,/\,#2
     a       }
    }

\newcommand{\morph}[6]{\begin{array}{cccc} #6: & #1  & \stackrel{#5}{\longrightarrow} &  #2  \\ & #3 & \longmapsto & #4  \end{array}}

\title{\bf Deformation theory of orthogonal and symplectic sheaves}

\author[E. Franco]{Emilio Franco}
\address{E. Franco,
\newline\indent Centro de An\'alise Matem\'atica, Geometria e Sistemas Din\^{a}micos, 
\newline\indent Instituto Superior T\'ecnico, Universidade de Lisboa, 
\newline\indent Av. Rovisco Pais s/n, 1049-001 Lisboa, Portugal}
\email{emilio.franco@tecnico.ulisboa.pt}

\date{\today}

\thanks{The author is supported by the Scientific Employment Stimulus program, fellowship reference CEECIND/04153/2017, funded by FCT (Portugal) with national funds. He aknowledges ICMAT (Madrid) for the financial support during a research stay at the centre.}

\begin{document}

\begin{abstract}
We show that the space of first-order deformations of an orthogonal (resp. symplectic) sheaf over a smooth projective scheme is the first hypercohomology space of a complex which is naturally constructed out of the orthogonal (resp. symplectic) sheaf. We also provide an obstruction theory of these objects whose target is the second hypercohomology space of this complex.
\end{abstract}

\maketitle


\section{Introduction}

Moduli spaces of principal bundles usually carry interesting geometric structures, being a powerful, and often unique, source of examples of varieties with prescribed properties and characteristics. Nevertheless, these spaces might be non-compact whenever the base (smooth) scheme has dimension higher than $1$. Principal sheaves or singular principal
bundles \cite{gomez&sols_orthogonal, gomez&sols_annals,Sch1,Sch2,GLSS1,GLSS2} provide a natural compactification of the moduli space of principal bundles for a connected complex reductive structure group. Therefore, moduli spaces of principal sheaves are projective varieties equipped with an interesting geometry, at least, on a dense subset. In order to check whether or not these properties extend to the compactification, we need a local description of the moduli spaces, precisely over the locus where the principal sheaves fail to be principal bundles. Such description would naturally derive from deformation theory of principal sheaves, which is still missing at present date.

Moduli spaces of sheaves have been very useful in defining invariants. For
instance, Donaldson polynomials or, more recently, Donaldson-Thomas
invariants. It is natural to consider also $G$-principal objects
for a reductive group $G$ but, again, we first need to study the deformation
theory of these objects.

In this article we consider orthogonal and symplectic sheaves (see Section \ref{sc orthogonal and symplectic sheaves} for the definition).
We show that the deformation and obstruction theory of these objects is controlled by a deformation complex naturally built out of our starting orthogonal (resp. symplectic) sheaf. A close version of this deformation complex appears in \cite{scalise} where a preliminary study of the deformation theory of quadratic sheaves is presented, along with a beautiful study of framed symplectic sheaves and their moduli spaces (see also \cite{scalise2}).


Let us briefly sketch the structure of our paper. After recalling the basic definitions of deformation and obstruction theory in \ref{sc deformation and obstruction theory}, we review in Section \ref{sc coherent sheaves} the classical case of coherent sheaves achieved by Grothendieck. We finish the preliminaries by presenting orthogonal and symplectic sheaves in Section \ref{sc orthogonal and symplectic sheaves}. In Section \ref{sc def complex} we introduce the deformation complex, providing a description of its zero, first and second hypercohomology spaces. We prove in Section \ref{sc deformation} that the space of first order deformations of orthogonal (resp. symplectic) sheaves coincide with the first hypercohomology space and we construct, in Section \ref{sc obstruction}, an obstruction theory for these objects with the second hypercohomology space as a target.

\subsubsection*{Acknowledgments}

The author wishes to thank Tom\'as L. G\'omez for introducing him on the subject of deformation of $G$-sheaves, for his mathematical insights on the problem, and, in general, for his generous support and his invaluable help during the completion of this article.

This project started during a research stay of the author at ICMAT (Madrid) and he expresses his gratitude for the warm hospitality he received. Special thanks go to O. Garc\'ia-Prada for making possible this stay.

\section{Preliminaries}

\subsection{Deformation and obstruction theory}
\label{sc deformation and obstruction theory}

See \cite{nitsure} for an introduction to deformation theory.
Let $k$ be an algebraically closed field, $\Art$ the category of all finite Artin local $k$-algebras with residue field $k$ and denote by $\Sets$ the category of all sets. Denote by $\FinVect$ the category of finite dimensional $k$-vector spaces. We construct the functor $k\langle \bullet \rangle : \FinVect \to \Art$ by setting $k\langle V \rangle = k \oplus V$ as $k$-vector spaces, and ring structure given by $(k,v)\cdot (k', v') = (kk', k'v + kv')$. Note that $k\langle V \rangle$ is the Artin local algebra whose maximal ideal is the vector space $\mM = V$, satisfying $\mM^2 = 0$, and its residue field is $k$. Note that one naturally has that $k\langle k \rangle \cong k[\epsilon]/(\epsilon^2)$. We say that $0 \to H \to B \stackrel{\tau}{\to} A \to 0$ is a {\em small extension} in $\Art$ if $\mM_B H = 0$.

Given a deformation functor $\F: \Art \to \Sets$, we list below the so-called {\em Schlessinger conditions} for $\F$.

\begin{itemize}
\item[\textbf{S1}:] For any homomorphism $C \to A$ and any small extension $0 \to H \to B \stackrel{\tau}{\to} A \to 0$ in $\Art$, the induced morphism 
\[
\F(B \times_A C) \longrightarrow \F(B) \times_{\F(A)} \F(C)
\]
is surjective.
 
\item[\textbf{S2}:] For any $B \in \Art$ and any $V \in \FinVect$, the induced morphism
\[
\F(B \times_k k\langle V \rangle) \longrightarrow \F(B) \times \F(k \langle V \rangle)
\]
is bijective.

\item[\textbf{S3}:] The space of first-order deformations $\F(k[\epsilon]/(\epsilon^2))$ is a finite dimensional $k$-vector space.

\item[\textbf{S4}:] For any $0 \to H \to B \stackrel{\tau}{\to} A \to 0$ small extension in $\Art$, the induced morphism
\[
\F(B \times_A B) \longrightarrow \F(B) \times_{\F(A)} \F(B)
\]
is bijective.
\end{itemize}

A pro-family is a family $r$ parametrized by a complete local $k$-algebra
$R$ with residue field $k$. A pro-family is versal if any family parametrized
by $A\in \Art$ is the pull-back of $r$ by a morphism $f:R \to A$.
It is a miniversal pro-family if furthermore
the induced map on first order infinitesimal deformations
$$
\Hom_{k-\alg}\left ( R, k[\epsilon]/(\epsilon^2) \right ) \longrightarrow \F \left ( k[\epsilon]/(\epsilon^2) \right ) 
$$
is an isomorphism. A functor is pro-representable if there is a
universal pro-family (i.e, the morphism $f:R \to A$ above is unique).

Schlessinger \cite{schlessinger} proved that a deformation functor admits a miniversal pro-family if and only if it satisfies \textnormal{\textbf{S1}}, \textnormal{\textbf{S2}} and \textnormal{\textbf{S3}}. Moreover, it is pro-representable if and only if it satisfies \textnormal{\textbf{S4}} along with the previous conditions.

An {\em obstruction theory} for the deformation functor $\F$ consists on a $k$-vector space $\Obs(\F)$ and, for any small extension $0 \to H \to B \stackrel{\tau}{\to} A \to 0$ in $\Art$, a morphism
\begin{equation} \label{eq obs morphism}
\Omega_\tau : \F(A) \longrightarrow  H \otimes_k \Obs(\F)
\end{equation}
satisfying the conditions listed below:

\begin{itemize}
\item[\textbf{O1}:] The sequence of sets 
\[
\xymatrix{
\F(B) \ar[r]^{\F(\tau)} & \F(A) \ar[r]^{\Omega_\tau \qquad}  & H \otimes_k \Obs(\F)
}
\]
is exact in the middle.
 
\item[\textbf{O2}:] For any morphism of small extensions, that is, a commutative diagram
\[
\xymatrix{
0 \ar[r] & H \ar[r] \ar[d]_{h} & B \ar[r]^{\tau} \ar[d]_{\beta} & A\ar[r] \ar[d]^{\alpha} & 0
\\
0 \ar[r] & H' \ar[r] & B' \ar[r]^{\tau'} & A' \ar[r] & 0,
}
\]
the induced diagram, 
\[
\xymatrix{
\F(A) \ar[r]^{\Omega_\tau \qquad} \ar[d]_{\F(\alpha)} &  H \otimes_k \Obs(\F) \ar[d]^{\id_{\Obs} \otimes_k h}
\\
\F(A') \ar[r]^{\Omega_{\tau'} \qquad} &  H' \otimes_k \Obs(\F),
}
\]
commutes.
\end{itemize}


\subsection{Coherent sheaves}
\label{sc coherent sheaves}

Let $X$ be a projective scheme over $k$. Denote by $\Coh(X)$ the category of coherent sheaves on $X$, and write $\Dd^b(X)$ for the bounded derived category of quasi-coherent sheaves with coherent cohomology. Given $E \in \Coh(E)$, by abuse of notation, we denote by $E \in \Dd^b(X)$ the complex supported on $0$-degree given by $E$. 

For any coherent sheaf $E$ one defines its {\em dual sheaf} by setting $E^\vee := \Hom_{\Coh(X)}(E, \Oo_X)$ and, for any complex $F^\bullet \in \Dd^b(X)$, its associated {\em dual complex} is $\D F^\bullet := \Hom_{\Dd^b(X)}(F^\bullet, \Oo_X)$. Every torsion free coherent sheaf injects naturally into its double dual, $E \hookrightarrow E^{\vee \vee}$, but unless $E$ is reflexive, $E^{\vee \vee}$ is not isomorphic to $E$. On the other hand $\D \circ \D = \id$, so $\D$ is an autoequivalence of $\Dd^b(X)$. If $E$ is locally free, $\D E \cong E^\vee$, but this does not hold for a general coherent sheaf. 
For any coherent sheaf $E$ over a projective scheme $X$, we define its deformation functor
\[
\Def_E : \Art \longrightarrow \Sets
\]
by associating to any $A \in \Art$ the set of isomorphism classes of pairs $(\Ee, \gamma)$, where $\Ee$ is a coherent sheaf on $X_A : = X \times \Spec(A)$, flat over $\Spec(A)$, and $\gamma: \Ee|_X \to E$ is an isomorphism. We say that two pairs $(\Ee, \gamma)$ and $(\Ee', \gamma')$ are isomorphic if there exists an isomorphism $f : \Ee \to \Ee'$ such that $\gamma' \circ (f|_X) = \gamma$. Every morphism of Artin algebras $a : A \to A'$, induces naturally a morphism $p_a : X_{A'} \to X_A$. Functoriality of $\Def_E$ follows from applying pull-backs under $p_a$.

Grothendieck showed that the cohomology of the complex $\D E \otimes^L E$ rules the deformation and obstruction theory of $E$. In particular $\Def_E$ admits a miniversal pro-family and its space of first-order deformations is 
\[
\Def_E(k[\epsilon]/(\epsilon^2)) \cong \Ext^1_X(E,E) \cong \H^1 (\D E \otimes^L E).
\]
If further, $E$ is simple, then $\Def_E$ is pro-representable. Also, $\Def_E$ admits a deformation theory with vector space
\[
\Obs(\Def_E) \cong \Ext^2_X(E,E) \cong \H^2 (\D E \otimes^L E). 
\]
In particular, when $\Ext^2_X(E,E) = 0$, the deformation functor $\Def_E$ is formally smooth.

\subsection{Orthogonal and symplectic sheaves}
\label{sc orthogonal and symplectic sheaves}

An {\em orthogonal sheaf} ({\em resp.} a {\em symplectic sheaf}) on the projective scheme $X$ is a pair $(E, \phi)$, where $E$ is a torsion-free coherent sheaf on $X$ and $\phi : E \otimes E \to \Oo_X$ is a homomorphism which is symmetric, $\phi \circ \theta_E = \phi$ (resp. anti-symmetric, $\phi \circ \theta_E = \phi$), under the permutation $\theta_E : E \otimes E \to E \otimes E$, and such that its restriction $\phi|_{U_E}$ to the open subset $U_E$ where $E$ is locally free is non-degenerate. Recalling that the centre of $\Ort(n,k)$ ({\em resp.} $\Sp(2m,k)$) is $\{ \id, -\id\}$, we say that an orthogonal ({\em resp.} symplectic) sheaf $(E,\phi)$ is {\em simple} if its automorphism group is $\Aut(E,\phi) = \{ \id_E, -\id_E \}$.

A {\em family of orthogonal} (resp. {\em symplectic}) {\em sheaves} parametrized by $S$ is a pair $(\Ee, \Phi)$ such that $\Ee \to X \times S$ is a torsion-free coherent sheaf, flat over $S$ and such that $\Ee_s := \Ee|_{X \times \{ s \}}$ is torsion-free for each closed point $s \in S$, and $\Phi : \Ee \otimes \Ee \to \Oo_{X \times S}$ is a symmetric (resp. anti-symmetric) homomorphism whose restriction $\Phi|_{U_\Ee}$ to the open set where $\Ee$ is locally free is non-degenerate. Two families $(\Ee, \Phi)$ and $(\Ee', \Phi')$ of orthogonal (resp. symplectic) sheaves are {\em isomorphic} if there exists an isomorphism $f : \Ee \to \Ee'$ such that $\Phi|_{U_\Ee} = \Phi'|_{U_\Ee} \circ (f|_{U_\Ee} \otimes f|_{U_\Ee})$.

For any coherent sheaf $E$, adjunction gives the identification 
\begin{equation} \label{eq adjunction in Coh}
\begin{array}{ccc}   \Hom_{\Coh(X)}(E \otimes E, \Oo_X))  & \cong  &  \Hom_{\Coh(X)}(E, E^\vee))  
\\
\phi & \longmapsto & \phi_\ad.  
\end{array}
\end{equation}
Permutation composed with the adjunction is the same as restricting to $E \hookrightarrow E^{\vee \vee}$ the dual to the adjoint morphism, 
\begin{equation} \label{eq phi_ad and permutation}
(\phi \circ \theta_E)_\ad = \phi_\ad^\vee|_E : E \hookrightarrow E^{\vee \vee}  \stackrel{\phi_\ad^\vee}{\longrightarrow} E^\vee.
\end{equation}

\begin{remark}
It follows from \eqref{eq adjunction in Coh} and \eqref{eq phi_ad and permutation} that there exists a 1:1 correspondence between orthogonal (resp. symplectic) sheaves and pairs $(E, \phi_\ad)$, where $E$ is a torsion-free sheaf, and $\phi_\ad : E \to E^\vee$ is such that $\phi_\ad = \phi_\ad^\vee|_E$ (resp. $\phi_\ad = -\phi_\ad^\vee|_E$), and $\phi_\ad|_{U_E}$ is an isomorphism.
\end{remark}

As in the case of coherent sheaves, the deformation theory of an orthogonal ({\em resp.} symplectic) sheaf is related to a complex in the derived category naturally build out of it. In the remaining of the section we will see how orthogonal and symplectic sheaves provide a well defined geometrical object living in the derived category.

Given any complex $F^\bullet \in \Dd^b(X)$, denote by $\theta_{F^\bullet}$ the derived permutation of $F^\bullet \otimes^L F^\bullet$. Adjunction gives 
\begin{equation} \label{eq adjunction in D}
\begin{array}{ccc}   \Hom_{\Dd^b(X)}(F^\bullet \otimes F^\bullet, \Oo_X))  & \cong  &  \Hom_{\Dd^b(X)}(F^\bullet, \D F^\bullet))  
\\
\phi^\bullet & \longmapsto & \phi^\bullet_\ad.
\end{array}
\end{equation}
As in the case of sheaves, permutation composed with adjunction dualizes morphisms of the form $\psi^\bullet : F^\bullet \to \D F^\bullet$,
\begin{equation} \label{eq theta and duality}
(\psi^\bullet \circ \theta_{F^\bullet})_\ad = \D \psi^\bullet_\ad.
\end{equation}

One can give a description of orthogonal and symplectic sheaves in terms of the derived category.

\begin{proposition} \label{pr derived quadratic sheaves}
There exists a 1:1 correspondence between orthogonal (resp. symplectic) sheaves and pairs $(E, \phi_\ad)$, where $E$ is a complex supported on $0$ determined by a torsion-free sheaf, and $\phi_\ad\in \Hom_{\Dd^b(X)}(E, \D E)$ such that $\phi_\ad= \D \phi_\ad$ (resp. $\phi_\ad= - \D \phi_\ad$) and $\HH^0(\phi_\ad)|_{U_E} : E|_{U_E} \to E^\vee|_{U_E}$ is an isomorphism.
\end{proposition}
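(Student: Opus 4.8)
The plan is to transport the sheaf-level statement of the Remark across the functor $E \mapsto E$ (sheaf in degree $0$) into $\Dd^b(X)$, using the comparison between $\Hom_{\Coh(X)}$ and $\Hom_{\Dd^b(X)}$ together with the compatibility \eqref{eq theta and duality}. First I would observe that a torsion-free coherent sheaf $E$, viewed as a complex concentrated in degree $0$, satisfies $\HH^0(\D E) = E^\vee$: indeed $\D E = R\Hom(E, \Oo_X)$, whose $0$-th cohomology sheaf is $\Hom_{\Coh(X)}(E,\Oo_X) = E^\vee$ by definition, so there is a canonical truncation morphism $\D E \to E^\vee$ inducing an isomorphism on $\HH^0$. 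Since $E$ is concentrated in degree $0$, the natural map
\[
\Hom_{\Dd^b(X)}(E, \D E) \longrightarrow \Hom_{\Coh(X)}\bigl(E, \HH^0(\D E)\bigr) = \Hom_{\Coh(X)}(E, E^\vee)
\]
given by postcomposition with $\D E \to \tau_{\leq 0}\D E \xrightarrow{\sim} E^\vee$ and using that $\Hom_{\Dd^b(X)}(E, F^\bullet) \cong \Hom_{\Dd^b(X)}(E, \tau_{\geq 0}F^\bullet)$ for $E$ in degree $0$ — actually one wants $\tau_{\leq 0}$ here since $\D E$ of a sheaf lives in non-negative degrees, so $\Hom_{\Dd^b(X)}(E,\D E) = \Hom_{\Dd^b(X)}(E, \HH^0(\D E)) = \Hom_{\Coh(X)}(E,E^\vee)$ — is a bijection. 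Thus $\phi_\ad \in \Hom_{\Dd^b(X)}(E,\D E)$ corresponds bijectively to $\phi_\ad \in \Hom_{\Coh(X)}(E,E^\vee)$, and I would check that this bijection is the one compatible with the two adjunctions \eqref{eq adjunction in Coh} and \eqref{eq adjunction in D}, so that it sends (derived) symmetric homomorphisms to symmetric ones.

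Next I would match the symmetry conditions. On the derived side, $\phi_\ad = \D\phi_\ad$ (resp. $=-\D\phi_\ad$) together with \eqref{eq theta and duality} says exactly that the morphism $\phi^\bullet : E \otimes^L E \to \Oo_X$ satisfies $\phi^\bullet \circ \theta_E = \phi^\bullet$ (resp. $=-\phi^\bullet$). On the sheaf side, \eqref{eq phi_ad and permutation} identifies $(\phi\circ\theta_E)_\ad$ with $\phi_\ad^\vee|_E$, the restriction of the dual adjoint along $E \hookrightarrow E^{\vee\vee}$. So I must verify that under the bijection of the previous paragraph, the derived self-duality $\D\phi_\ad$, after passing to $\HH^0$, becomes precisely $\phi_\ad^\vee|_E : E \to E^{\vee\vee} \to E^\vee$. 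This is the heart of the matter: it amounts to computing $\HH^0(\D\phi_\ad)$ for $\phi_\ad : E \to \D E$ a map of complexes with $E$ in degree $0$. Applying $\D = R\Hom(-,\Oo_X)$ to $\phi_\ad$ gives $\D\phi_\ad : \D\D E = E \to \D E$ — wait, one gets $\D(\D E) \to \D E$, i.e. $E \to \D E$ — and taking $\HH^0$ uses that $\HH^0(\D(\D E)) = \HH^0(E) = E$ maps to $\HH^0(\D E) = E^\vee$; unwinding the double-dual identifications shows this composite factors through $E^{\vee\vee}$ and agrees with $\phi_\ad^\vee|_E$. Hence $\phi_\ad = \D\phi_\ad$ in $\Dd^b(X)$ is equivalent to $\phi_\ad = \phi_\ad^\vee|_E$ in $\Coh(X)$, matching the Remark exactly (and likewise with signs for the symplectic case).

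Finally, the non-degeneracy conditions coincide tautologically: $\HH^0(\phi_\ad)$ is just $\phi_\ad$ as a sheaf map $E \to E^\vee$ under the identification above, so $\HH^0(\phi_\ad)|_{U_E}$ being an isomorphism is the same as $\phi_\ad|_{U_E}$ being an isomorphism, which by the Remark is the same as $\phi|_{U_E}$ being non-degenerate. Combining the three matchings with the Remark yields the claimed $1{:}1$ correspondence. I expect the main obstacle to be the careful bookkeeping in the second paragraph — namely verifying that the derived duality $\D$, when restricted to complexes concentrated in degree $0$, reproduces on $\HH^0$ exactly the "dual adjoint restricted to the double dual" operation of \eqref{eq phi_ad and permutation}, rather than some variant differing by the truncation map $E^{\vee\vee} \to E$ or its failure to be an isomorphism; one must use that $E$ is torsion-free (so $E \hookrightarrow E^{\vee\vee}$) but need not be reflexive, and track that the correspondence records $\phi_\ad$ as a map out of $E$, not out of $E^{\vee\vee}$.
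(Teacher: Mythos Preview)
Your proposal is correct and follows essentially the same route as the paper's proof: the paper simply invokes the adjunctions \eqref{eq adjunction in Coh}, \eqref{eq adjunction in D}, the compatibility \eqref{eq theta and duality}, and the identification $\Hom_{\Dd^b(X)}(E,\D E)\cong\Hom_{\Coh(X)}(E,E^\vee)$ obtained ``by applying the standard truncations on $\D E$'', which is precisely the content of your first paragraph. Your second and third paragraphs make explicit the bookkeeping that the paper leaves implicit---in particular the verification that $\HH^0(\D\phi_\ad)$ recovers $\phi_\ad^\vee|_E$---so your write-up is a faithful elaboration rather than a different argument.
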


\begin{proof}
This follows immediately after \eqref{eq adjunction in Coh}, \eqref{eq adjunction in D}, \eqref{eq theta and duality} and the identification
\[
\Hom_{\Dd^b(X)}(E, \D E) \cong \Hom_{\Coh(X)}(E, E^\vee),
\]
which can be proved by applying the standard truncations on $\D E$.
\end{proof}

\section{The deformation complex}

\label{sc def complex}

Consider a coherent sheaf $E$ over $X$ projective and a morphism $\phi : E \otimes E \to \Oo_X$ such that $\phi \circ \theta_{E} = \phi$. Inspired by J. Scalise \cite{scalise}, we define 
\[
\Delta_{(E,\phi)}^+ := \left ( \id_{\D E \otimes^L \D E} + \theta_{\, \D E} \right ) \circ (\id_{\D E} \otimes \phi_\ad) : \D E \otimes^L E \longrightarrow \left ( \D E \otimes^L \D E \right )^{+}.
\]
Analogously, when $\phi \circ \theta_{E} = -\phi$, set
\[
\Delta_{(E,\phi)}^- :=  \left ( \id_{\D E \otimes^L \D E} - \theta_{\, \D E} \right )  \circ (\id_{\D E} \otimes \phi_\ad) : \D E \otimes^L E \longrightarrow \left ( \D E \otimes^L \D E \right )^{-}.
\]
Let us also consider their associated mapping cone shifted by $-1$,
\[
\Delta^{+,\bullet}_{(E,\phi)}: = C^\bullet(\Delta^{+}_{(E,\phi)})[-1],
\]
and
\[
\Delta^{-,\bullet}_{(E,\phi)}: = C^\bullet(\Delta^{-}_{(E,\phi)})[-1].
\]
These complexes fit in the distinguished triangle
\[
\Delta^{\pm,\bullet}_{(E,\phi)}\longrightarrow \D E \otimes^L E  \stackrel{\Delta^{\pm}_{(E,\phi)}}{\longrightarrow} \left ( \D E \otimes^L \D E \right )^{\pm} \longrightarrow \Delta^{\pm,\bullet}_{(E,\phi)}[1],
\]
giving a long-exact sequence in hypercohomology. 

\begin{proposition} \label{pr description of HH^0}
Let $E$ be a coherent sheaf over $X$ projective and consider $\phi : E \otimes E \to \Oo_X$ such that $\phi \circ \theta_E = \pm \phi$. Consider a finite dimensional $k$-vector space $V$. Then,
\[
V \otimes_k \HH^0\left (\Delta^{\pm,\bullet}_{(E,\phi)}\right ) = \left \{ \lambda \in \Hom_{\Coh}(E,E) \textnormal{ such that } \phi \circ (\id_E \otimes \lambda) + \phi \circ (\lambda \otimes \id_E) = 0 \right \},  
\]
\end{proposition}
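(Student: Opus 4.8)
The plan is to feed the defining distinguished triangle
\[
\Delta^{\pm,\bullet}_{(E,\phi)} \longrightarrow \D E \otimes^L E \stackrel{\Delta^{\pm}_{(E,\phi)}}{\longrightarrow} \left( \D E \otimes^L \D E \right)^{\pm} \longrightarrow \Delta^{\pm,\bullet}_{(E,\phi)}[1]
\]
into the associated long exact sequence in hypercohomology, read off $\HH^{0}$ of the left-hand term as $\ker\HH^{0}(\Delta^{\pm}_{(E,\phi)})$, and then identify this kernel by hand with the displayed subspace of $\Hom_{\Coh(X)}(E,E)$. Since $V\otimes_k(-)$ is exact over $k$ and commutes with hypercohomology of the finite-dimensional $V$, it suffices to treat $V=k$.

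First I would pin down the two outer terms in degrees $0$ and $-1$. Grothendieck's identification $\HH^{i}(\D E\otimes^L E)\cong\Ext^{i}_X(E,E)$, recalled in Section~\ref{sc coherent sheaves}, gives $\HH^{0}(\D E\otimes^L E)\cong\Hom_{\Coh(X)}(E,E)$. For the target: since $X$ is smooth, $E$ is a perfect complex, so tensor--hom adjunction yields $\D E\otimes^L\D E\cong\D(E\otimes^L E)$, and therefore $\HH^{-1}(\D E\otimes^L\D E)\cong\Ext^{-1}_X(E\otimes^L E,\Oo_X)=0$ because $E\otimes^L E$ is concentrated in non-positive cohomological degrees; passing to the direct summand $(\D E\otimes^L\D E)^{\pm}$ cut out by the idempotent $\frac{1}{2}(\id\pm\theta_{\D E})$ (here one uses $\mathrm{char}\,k\neq2$) preserves this vanishing. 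Feeding these into the long exact sequence collapses it near degree $0$ to
\[
0\longrightarrow\HH^{0}\big(\Delta^{\pm,\bullet}_{(E,\phi)}\big)\longrightarrow\Hom_{\Coh(X)}(E,E)\stackrel{\HH^{0}(\Delta^{\pm}_{(E,\phi)})}{\longrightarrow}\HH^{0}\big((\D E\otimes^L\D E)^{\pm}\big),
\]
so that $\HH^{0}(\Delta^{\pm,\bullet}_{(E,\phi)})=\ker\HH^{0}(\Delta^{\pm}_{(E,\phi)})$.

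The core of the argument is to make this last map explicit. Truncating $\D(E\otimes^L E)$ as in the proof of Proposition~\ref{pr derived quadratic sheaves} and using \eqref{eq adjunction in Coh} identifies $\HH^{0}(\D E\otimes^L\D E)\cong\Hom_{\Coh(X)}(E\otimes E,\Oo_X)\cong\Hom_{\Coh(X)}(E,E^\vee)$, with $\theta_{\D E}$ acting as $\psi\mapsto\psi^\vee|_E$ (the degree-$0$ incarnation of \eqref{eq theta and duality}, compatible with the sheaf-level \eqref{eq phi_ad and permutation}) and $\HH^{0}((\D E\otimes^L\D E)^{\pm})$ the corresponding $\pm1$-eigenspace; while $\id_{\D E}\otimes\phi_\ad$ acts on $\HH^{0}$ by post-composition with $\phi_\ad\colon E\to E^\vee$, i.e. $\lambda\mapsto\phi_\ad\circ\lambda$. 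Hence
\[
\HH^{0}(\Delta^{\pm}_{(E,\phi)})(\lambda)=\phi_\ad\circ\lambda\;\pm\;(\phi_\ad\circ\lambda)^\vee|_E=\phi_\ad\circ\lambda\;\pm\;\lambda^\vee\circ(\phi_\ad^\vee|_E).
\]
By Proposition~\ref{pr derived quadratic sheaves} (equivalently the Remark following \eqref{eq phi_ad and permutation}) one has $\phi_\ad^\vee|_E=\pm\phi_\ad$, with the sign agreeing with that of the complex, so the two signs combine and, in both the orthogonal and the symplectic case,
\[
\HH^{0}(\Delta^{\pm}_{(E,\phi)})(\lambda)=\phi_\ad\circ\lambda+\lambda^\vee\circ\phi_\ad.
\]
Unwinding \eqref{eq adjunction in Coh} once more gives $\phi_\ad\circ\lambda=\big(\phi\circ(\lambda\otimes\id_E)\big)_\ad$ and $\lambda^\vee\circ\phi_\ad=\big(\phi\circ(\id_E\otimes\lambda)\big)_\ad$; since $(-)_\ad$ is bijective, $\lambda$ lies in this kernel exactly when $\phi\circ(\id_E\otimes\lambda)+\phi\circ(\lambda\otimes\id_E)=0$, which is the asserted description.

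I expect the bookkeeping in the third paragraph to be the main obstacle: one has to verify with care that the formal identity \eqref{eq theta and duality} descends to $\psi\mapsto\psi^\vee|_E$ on $\HH^{0}$ acting on the intended tensor factor, and that the sign produced by Proposition~\ref{pr derived quadratic sheaves} cancels the ambient sign of $\id\pm\theta_{\D E}$ precisely, so that the orthogonal and symplectic cases collapse to the same relation. A smaller but real point is the vanishing $\HH^{-1}((\D E\otimes^L\D E)^{\pm})=0$, where smoothness of $X$ (perfectness of $E$, hence $\D E\otimes^L\D E\cong\D(E\otimes^L E)$) and $\mathrm{char}\,k\neq2$ are both used.
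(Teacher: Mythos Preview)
Your argument is correct in outline and reaches the right conclusion, but it takes a different route from the paper and, more importantly, it imports hypotheses that are not in the statement. The proposition is stated for $X$ projective, not smooth, and no assumption on $\mathrm{char}\,k$ is made; yet you invoke both to get $\HH^{-1}\big((\D E\otimes^L\D E)^{\pm}\big)=0$ and the identification of $\HH^{0}(\D E\otimes^L\D E)$. These extra hypotheses are unnecessary: on any projective scheme $E$ admits a locally free resolution $W^\bullet\to E$ concentrated in non-positive degrees, so $\D E$ is represented by $(W^\bullet)^\vee$ sitting in degrees $\geq 0$, hence $\D E\otimes^L\D E$ and any sub- or quotient complex such as $(\D E\otimes^L\D E)^{\pm}$ are also in degrees $\geq 0$, giving the $\HH^{-1}$-vanishing for free. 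The same resolution identifies $\HH^{0}(\D E\otimes^L\D E)$ with $\Hom_X(E\otimes E,\Oo_X)$ directly (kernel of the degree-$0$ differential of $(W^\bullet)^\vee\otimes(W^\bullet)^\vee$), without passing through $\D(E\otimes^L E)$ or perfectness. Once you replace your justification of these two facts, the rest of your long-exact-sequence argument and the sign chase go through as written.

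By contrast, the paper bypasses the long exact sequence entirely: it fixes a locally free resolution $W^\bullet\stackrel{\pi}{\to}E$, writes out the mapping cone $C^\bullet(\Delta^{\pm}_{(E,\phi)})$ explicitly in degrees $-1$ and $0$, and reads off $\HH^{0}(\Delta^{\pm,\bullet}_{(E,\phi)})=\HH^{-1}(C^\bullet)=\ker\delta_{-1}$. The first component of $\delta_{-1}$ forces $\lambda\in\Hom_X(W_0,E)$ to factor through $E$, and the second component yields $\phi\circ(\pi\otimes\lambda)+\phi\circ(\lambda\otimes\pi)=0$ after the same cancellation of the $\pm$ against $\phi\circ\theta_E=\pm\phi$ that you perform. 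The two approaches are equivalent in content; the paper's is more elementary and makes the absence of any smoothness or characteristic hypothesis transparent, while yours is cleaner once those two auxiliary facts are properly justified.
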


\begin{proof}
By definition $\HH^{i+1}\left (\Delta^{\pm,\bullet}_{(E,\phi)}\right ) = \HH^{i} \left (C^\bullet \left (\Delta^{\pm}_{(E,\phi)} \right ) \right )$, so we focus on the description of the complex $C^\bullet \left (\Delta^{\pm}_{(E,\phi)} \right )$. Picking the locally free resolution $W^\bullet \stackrel{\pi}{\longrightarrow} E \to 0$, one can see that this complex is $0$ for $H < -1$, and in degrees $-1$ and $0$ amounts to
\[
0 \longrightarrow \Hom_X(W_0, E) \stackrel{\delta_{-1}}{\longrightarrow} \begin{pmatrix} \Hom_X(W_{-1}, E) \\ \Hom_X(W_0 \otimes W_0, \Oo_X)^{\, \pm} \end{pmatrix}, 
\]
with
\[
\delta_{-1} = \begin{pmatrix}-(\bullet) \circ \partial_{-1} \\ \phi \circ (\pi \otimes (\bullet)) \circ (\id_{W_0\otimes W_0} \pm \theta_{W_0} ) \end{pmatrix},
\]
where $\pi$ denotes the projection $W_0 \stackrel{\pi}{\longrightarrow} E \to 0$ and its dual, $\pi^\vee$, the inclusion $0 \to E^\vee \stackrel{\pi^\vee}{\longrightarrow} W_0^\vee$. The first statement follows from the fact that $\HH^{-1} \left (C^\bullet \left (\Delta^{\pm}_{(E,\phi)} \right ) \right ) = \ker (\delta_{-1})$ and 
\begin{align*}
\phi \circ (\pi \otimes \lambda) \pm \phi \circ (\pi \otimes \lambda) \circ \theta_{W_0} ) & = \phi \circ (\pi \otimes \lambda) \pm \phi  \circ \theta_{E} \circ (\lambda \otimes \pi) ) 
\\
& = \phi \circ (\pi \otimes \lambda) + \phi \circ (\lambda \otimes \pi) ).
\end{align*}
And the result follows.
\end{proof}

\begin{proposition} \label{pr description HH^1}
Let $E$ be a coherent sheaf over $X$ projective and consider $\phi : E \otimes E \to \Oo_X$ such that $\phi \circ \theta_E = \pm \phi$. Consider a finite dimensional $k$-vector space $V$. Then, $V \otimes_k \HH^1\left (\Delta^{\pm,\bullet}_{(E,\phi)}\right )$ is the finite dimensional vector space classifying isomorphism classes of $1$-extensions 
\begin{equation} \label{eq extension HH^1}
0 \longrightarrow V \otimes_k E \stackrel{i}{\longrightarrow} F \stackrel{j}{\longrightarrow} E \longrightarrow 0,
\end{equation}
equipped with 
\[
\Phi : \quotient{F \otimes F}{I} \to V \otimes_k \Oo_X,
\]
where $I \subset F \otimes F$ is the subsheaf generated by $\left (f_1, (i \circ j)(f_2) \right ) - \left ( (i \circ j)(f_1), f_2 \right )$ for all $f_i \in F$, and $\Phi$ is such that
\begin{equation} \label{eq Phi is pm}
\Phi \circ \theta_{F} = \pm \Phi
\end{equation}
and
\begin{equation} \label{eq Phi restricts to phi}
\Phi \circ (\id_F \otimes i) = (\id_V \otimes_k \phi) \circ \left ( \pi \otimes \id_{(V \otimes_k E)} \right ).
\end{equation}
\end{proposition}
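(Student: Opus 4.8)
My plan is to continue, one cohomological degree further, the explicit locally-free-resolution computation from the proof of Proposition~\ref{pr description of HH^0}, and then to match the resulting degree-$0$ cocycle data with the pair $(F,\Phi)$. Since $\Delta^{\pm,\bullet}_{(E,\phi)} = C^\bullet(\Delta^{\pm}_{(E,\phi)})[-1]$, one has $\HH^1(\Delta^{\pm,\bullet}_{(E,\phi)}) = \HH^0\!\big(C^\bullet(\Delta^{\pm}_{(E,\phi)})\big)$. Fixing the locally free resolution $W^\bullet \xrightarrow{\pi} E$, I would write $C^\bullet(\Delta^{\pm}_{(E,\phi)})$ out in degrees $-1,0,1$, extending by one term the computation in Proposition~\ref{pr description of HH^0}; in degree $1$ the relevant summand is $\Hom_X(W_{-2},E)\oplus\Hom_X(W_{-1}\otimes W_0,\Oo_X)$. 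A class in $\HH^0$ is then represented by a pair $(\beta,\Psi)$, with $\beta\in\Hom_X(W_{-1},E)$ and $\Psi\in\Hom_X(W_0\otimes W_0,\Oo_X)^{\pm}$, subject to the two cocycle equations produced by the differential into degree $1$: first $\beta\circ\partial_{-2}=0$, and second an identity of the shape $\Psi\circ(\partial_{-1}\otimes\id_{W_0}) = \mp\,\phi\circ(\beta\otimes\pi)$ (appropriately $\pm$-symmetrised), where $\partial_\bullet$ are the resolution differentials; and two such pairs represent the same class exactly when they differ by a coboundary $\delta_{-1}(\lambda)=\big(-\lambda\circ\partial_{-1},\ \phi\circ(\pi\otimes\lambda)\circ(\id_{W_0\otimes W_0}\pm\theta_{W_0})\big)$ for some $\lambda\in\Hom_X(W_0,E)$. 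The same structure can be read off from the long exact hypercohomology sequence of the defining triangle, which exhibits $\HH^1(\Delta^{\pm,\bullet}_{(E,\phi)})$ as an extension of $\ker\!\big(\Ext^1_X(E,E)\xrightarrow{\Delta^{\pm}}\HH^1((\D E\otimes^L\D E)^{\pm})\big)$ by $\coker\!\big(\Hom_X(E,E)\xrightarrow{\Delta^{\pm}}\HH^0((\D E\otimes^L\D E)^{\pm})\big)$ — already the expected shape of the set of pairs $(F,\Phi)$.

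Next I would build the bijection. Given $(\beta,\Psi)$: the equation $\beta\circ\partial_{-2}=0$ makes $[\beta]$ a genuine element of $\Ext^1_X(E,E)$, realised by the Yoneda extension $0\to E\xrightarrow{i}F\xrightarrow{j}E\to 0$ with $F=(W_0\oplus E)/\{(\partial_{-1}w,-\beta w):w\in W_{-1}\}$ and $j(w_0,e)=\pi(w_0)$; injectivity of $i$ again uses $\beta\circ\partial_{-2}=0$, and $F$ is torsion-free because it is an extension of torsion-free sheaves. There is a surjection $W_0\oplus E\twoheadrightarrow F$, and I would define $\Phi$ by descending the form equal to $\Psi$ on $W_0\otimes W_0$, equal to $\phi$ on $E\otimes E$, and with the cross terms fixed by requirement \eqref{eq Phi restricts to phi}; the second cocycle equation is precisely what makes this form descend to $F\otimes F/I$, the symmetry \eqref{eq Phi is pm} is inherited from $\Psi\in(\cdot)^{\pm}$, and \eqref{eq Phi restricts to phi} holds by construction. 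Conversely, given $(F,i,j,\Phi)$, choose a lift $\wt{s}:W_0\to F$ of $\pi$ through $j$ (possible since $W_0$ is locally free and $j$ surjective), set $\beta:=i^{-1}\circ\wt{s}\circ\partial_{-1}$ — which lands in $i(E)$ because $j\circ\wt{s}\circ\partial_{-1}=\pi\circ\partial_{-1}=0$ — and $\Psi:=\Phi\circ\ol{\wt{s}\otimes\wt{s}}$, where the bar denotes passage to $F\otimes F/I$; the first cocycle equation then follows from $\partial_{-1}\circ\partial_{-2}=0$, and the second from the defining relations of $I$ together with \eqref{eq Phi restricts to phi}.

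It then remains to check that the two assignments are mutually inverse and descend to the stated equivalence classes. Changing the lift $\wt{s}$ by a homomorphism $W_0\to i(E)$, or replacing $W^\bullet$ by another resolution, alters $(\beta,\Psi)$ only by a $\delta_{-1}$-coboundary; conversely, any isomorphism of data $(F,i,j,\Phi)\cong(F',i',j',\Phi')$ inducing the identity on the sub- and quotient-sheaf (the notion implicit in Section~\ref{sc orthogonal and symplectic sheaves}) is of the form $\id_F+i\circ\mu\circ j$ for a unique $\mu\in\Hom_X(E,E)$, and a direct computation shows this modifies $\Psi$ by exactly $\phi\circ(\pi\otimes\mu)\circ(\id_{W_0\otimes W_0}\pm\theta_{W_0})$, i.e. by the $\delta_{-1}$-coboundary attached to the corresponding $\lambda$. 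This yields the claimed bijection; tensoring everything with the finite-dimensional space $V$ throughout is harmless by $k$-linearity and produces the statement for $V\otimes_k\HH^1(\Delta^{\pm,\bullet}_{(E,\phi)})$, while finiteness of the dimension is automatic because $\Delta^{\pm,\bullet}_{(E,\phi)}$ is a bounded complex with coherent cohomology on a projective scheme.

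The step I expect to be the main obstacle is the bookkeeping around $F\otimes F/I$: since $F$ is torsion-free but generally not locally free, $F\otimes F$ is not the natural object and $F\otimes F/I$ is the correct replacement (it is $F\otimes_{\Oo_{X\times\Spec k[\epsilon]}}F$ in the motivating case $V=k$). One must match, with the correct signs, the second cocycle equation — which lives on the locally free sheaf $W_0\otimes W_0$ — against the descent of $\Phi$ along the surjection onto the non-flat sheaf $F$ together with the restriction condition \eqref{eq Phi restricts to phi}; and, for the last step, one must pin down exactly which isomorphisms of $(F,\Phi)$ are admitted and confirm that the induced action on cocycle data coincides with the $\delta_{-1}$-coboundary action, so that the quotient genuinely equals $\HH^0(C^\bullet(\Delta^{\pm}_{(E,\phi)}))$.
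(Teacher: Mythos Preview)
Your proposal is correct and follows essentially the same route as the paper: compute $\HH^0(C^\bullet(\Delta^\pm_{(E,\phi)}))$ explicitly via a locally free resolution $W^\bullet\to E$, identify cocycles as pairs $(\beta,\Psi)\in\Hom_X(W_{-1},E)\oplus\Hom_X(W_0\otimes W_0,\Oo_X)^\pm$ satisfying the two closure conditions, build $F$ as the pushout $(E\oplus W_0)/\mathrm{im}(\beta\oplus\partial_{-1})$, descend $\overline{\phi}+\overline{\Psi}$ to $\Phi$, and match the $\delta_{-1}$-coboundary action with the equivalence of pairs $(F,\Phi)$. The only cosmetic differences are notational (the paper writes $(\eta,\Psi)$ and places the two tensor factors in the opposite order in the cocycle equation) and in the converse direction the paper pulls $\Phi$ back along $E\oplus W_0\twoheadrightarrow F$ rather than choosing a lift $\tilde s$, but these amount to the same thing.
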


\begin{proof} 
We have to describe $V \otimes_k \HH^0 \left (C^\bullet \left (\Delta^{\pm}_{(E,\phi)} \right ) \right )$, which is finite dimensional since $V$ is and so are all the cohomology spaces of $\D E \otimes^L E$ and $\D E \otimes^L \D E$. Taking the locally free resolution $W^\bullet \stackrel{\pi}{\longrightarrow} E \to 0$, the complex $C^\bullet \left (\Delta^{\pm}_{(E,\phi)} \right )$ can be described in degrees $0$ and $1$ as
\[
\begin{pmatrix} \Hom_X(W_{-1}, E) \\ \Hom_X(W_0 \otimes W_0, \Oo_X)^\pm \end{pmatrix} \stackrel{\delta_{0}}{\longrightarrow} \begin{pmatrix} \Hom_X(W_{-2}, E) \\ \Hom_X(W_0 \otimes W_{-1}, \Oo) \end{pmatrix},
\]
with
\[
\delta_0 = \begin{pmatrix} -(\bullet) \circ \partial_{-2} & 0 \\ \phi \circ ( \pi \otimes (\bullet)) & (\bullet) \circ (\id_{W_0} \otimes \partial_{-1}) \end{pmatrix}.
\] 
We have
\[
\ker (\delta_0) = \left \{ \begin{matrix} (\eta, \Psi) \in \Hom_X(W_{-1}, E) \oplus \Hom_X(W_0 \otimes W_0, \Oo_X)^\pm \\ \eta \circ \partial_{-2} = 0 \\ \phi \circ ( \pi \otimes \eta) + \Psi \circ (\id_{W_0} \otimes \partial_{-1}) = 0 \end{matrix} \right \}
\]
and $\HH^0 \left (C^\bullet \left (\Delta^{\pm}_{(E,\phi)} \right ) \right )=H^0(\ker(\delta_0))$.
For any pair $(\overline{\eta},\overline{\Psi}) \in V \otimes_k  H^0(\ker (\delta_0))$, with $\overline{\eta} \in \Hom_X(W_{-1}, V \otimes_k E)$ and $\overline{\Psi} \in \Hom_X(W_0 \otimes W_0, V \otimes_k \Oo_X)^\pm$, one can naturally construct an extension \eqref{eq extension HH^1} setting
\[
F := (V \otimes_k E) \oplus W_{0} / (\overline{\eta} \oplus \partial_{-1})(W_{-1}),
\]
with the injection
\[
\morph{E}{F = (V \otimes_k E) \oplus W_{0} / (\overline{\eta} \oplus \partial_{-1})(W_{-1})}{\overline{e}}{[(\overline{e}, 0)]}{}{i}
\]
and the projection
\[
\morph{F = (V \otimes_k E) \oplus W_{0} / (\overline{\eta} \oplus \partial_{-1})(W_{-1})}{W_{0} / \partial_{-1}(W_{-1}) \cong E}{[(\overline{e}, w)]}{[w].}{}{j}
\]

Let $\overline{\phi}$ denote $\id_V \otimes_k \phi$. We have that $\overline{\phi} \circ (\id_V \otimes_k \theta_{E}) = \pm \overline{\phi}$ by hypothesis on $\phi$. Observe also that we pick $\overline{\Psi}$ in $\Hom_X(W_0 \otimes W_0, V \otimes_k \Oo_X)^\pm$, hence $\overline{\Psi} \circ \theta_{W_0} = \pm \overline{\Psi}$. Therefore, 
\[
\overline{\phi} + \overline{\Psi} : (V \otimes E \otimes E) \oplus (W_0 \otimes W_0) \otimes (E \oplus W_0) \to V \otimes_k \Oo_X 
\]
satisfies 
\begin{equation} \label{eq phi + Psi is pm}
(\overline{\phi} + \overline{\Psi}) \circ ((\id_V \otimes_k \theta_{E}) \oplus \theta_{W_0}) = \pm (\overline{\phi} + \overline{\Psi}).
\end{equation}
Recalling that $(\overline{\eta}, \overline{\Psi}) \in V \otimes_k \ker(\delta_0)$, we have $\overline{\phi} \circ ( \pi \otimes \overline{\eta}) + \overline{\Psi} \circ (\id_{W_0} \otimes \partial_{-1}) = 0$, we observe that
\begin{equation} \label{eq descent condition 1 for Phi}
\left . \left (\overline{\phi} + \overline{\Psi} \right) \right|_{(E \oplus W_0) \otimes (\overline{\eta} \oplus \partial_{-1})(W_{-1})} = 0
\end{equation}
As a direct consequence of \eqref{eq phi + Psi is pm} and \eqref{eq descent condition 1 for Phi}, one has 
\begin{equation} \label{eq descent condition 2 for Phi}
\left . \left (\overline{\phi} + \overline{\Psi} \right) \right|_{(\overline{\eta} \oplus \partial_{-1})(W_{-1}) \otimes (E \oplus W_0)} = 0
\end{equation}
Then, $\overline{\phi} + \overline{\Psi}$ defines $\Phi : F \otimes F \to \Oo_X$. Since $\overline{\phi} + \overline{\Psi}$ satisfies \eqref{eq phi + Psi is pm}, it follows that $\Phi$ satisfies \eqref{eq Phi is pm}. Obviously, $\overline{\phi} + \overline{\Psi}$ restricted to $E \oplus 0$ coincides with $\overline{\phi}$, hence $\Phi$ satisfies \eqref{eq Phi restricts to phi} as well.

We now study the action of $\image (\delta_{-1})$ on $\ker (\delta_0)$. For any $(\overline{\eta}, \overline{\Psi}) \in \Hom_X(W_{-1}, E) \oplus \Hom_X(W_0 \otimes W_0, \Oo_X)$ and any $\lambda \in \Hom_X(W_0,E)$, set 
\begin{equation} \label{eq relation between eta Theta and eta' Theta'}
(\overline{\eta}', \overline{\Psi}') = (\overline{\eta}, \overline{\Psi}) + \delta_{-1}(\lambda) = (\overline{\eta} -\lambda \circ \partial_{-1} \, , \,  \overline{\Psi} + \overline{\phi} \circ (\pi \otimes \lambda) + \overline{\phi} \circ (\lambda \otimes \pi) ),
\end{equation}
Let $F'$ be $E \oplus W_0/((-\overline{\eta}') \oplus \partial_{-1})(W_{-1})$. Consider the isomorphism
\begin{equation} \label{eq wt beta}
\begin{pmatrix} \id_E & \lambda \\ 0 & \id_{W_0} \end{pmatrix} : E \oplus W_0 \stackrel{\cong}{\longrightarrow} E \oplus W_0.
\end{equation}
Since the image under \eqref{eq wt beta} of $ (\overline{\eta} \oplus \partial_{-1})(W_{-1})$ is precisely $(\overline{\eta}' \oplus \partial_{-1})(W_{-1})$, this descends to an isomorphism
\[
\lambda_1 : F \stackrel{\cong}{\longrightarrow} F'.
\]
Let $\Phi' \in \Hom_X(F' \otimes F', \Oo_X)^\pm$ defined by $\overline{\phi} + \overline{\Psi}'$. We can check that 
\begin{equation} \label{eq Phi and Phi'}
\Phi = \Phi' \circ (\lambda_1 \otimes \lambda_1),
\end{equation}
so $(\overline{\eta},\overline{\Psi})$ and $(\overline{\eta}', \overline{\Psi}')$ define isomorphic extensions, with this isomorphism relating the corresponding quadratic form.

Conversely, suppose we are given an extension of the form \eqref{eq extension HH^1} and $\Phi : F \otimes F \to \Oo_X$ satisfying \eqref{eq Phi is pm}. Picking a locally free resolution $W^\bullet \stackrel{\pi}{\longrightarrow} E \to 0$, the extension \eqref{eq extension HH^1} determines $\overline{\eta} \in \Hom_X(W_{-1}, E)$ such that $\overline{\eta}(\partial_{-2}(W_{-2})) = 0$. Taking the pull-back of $\Phi$ under $E \oplus W_0 \to F$ and restricting to $W_0$, we obtain $\overline{\Psi} \in \Hom_X(W_0 \otimes W_0, \Oo_X)^\pm$. Since it comes from $\Phi$ defined over $F$, it follows that $\overline{\Psi}$ satisfies \eqref{eq descent condition 1 for Phi} and \eqref{eq descent condition 2 for Phi}. Therefore $(\overline{\eta}, \overline{\Psi})$ lies in $\ker (\delta_0)$. Suppose further that we are given two isomorphic extensions
\[
\xymatrix{
0 \ar[r] & E \ar@{=}[d] \ar[r] & F \ar[d]^{\lambda_1} \ar[r] & E \ar@{=}[d] \ar[r] & 0
\\
0 \ar[r] & E \ar[r] & F' \ar[r] & E \ar[r] & 0,
}
\]
and $\Phi \in \Hom_X(F \otimes F, \Oo_X)^\pm$ and $\Phi' \in \Hom_X(F' \otimes F', \Oo_X)^\pm$ satisfying \eqref{eq Phi and Phi'}. Let $(\overline{\eta}, \overline{\Psi})$ be the element of $\ker(\delta_0)$ associated to the first extension equipped with $\Phi$, and let $(\overline{\eta}', \overline{\Psi}') \in \ker(\delta_0)$ be the pair associated to the second extension and $\Phi'$. Since $\lambda_1$ defines an isomorphism of extensions, it then comes from some isomorphism $E \oplus W_0 \to E \oplus W_0$ of the form \eqref{eq wt beta} for some $\lambda \in \Hom_X(W_0, W_0)$. It then follows that, $\lambda$ is such that \eqref{eq relation between eta Theta and eta' Theta'} holds, so both $(\overline{\eta}, \overline{\Psi})$ and $(\overline{\eta}', \overline{\Psi}')$ are related by the action of $\image (\delta_{-1})$.
\end{proof}

\begin{proposition} \label{pr description HH^2}
Let $E$ be a coherent sheaf over $X$ smooth and projective and consider $\phi : E \otimes E \to \Oo_X$ such that $\phi \circ \theta_E = \pm \phi$. Consider a finite dimensional $k$-vector space $V$. Then, $V \otimes_k \HH^2\left (\Delta^{\pm,\bullet}_{(E,\phi)}\right )$ is the space classifying equivalence classes of $2$-extensions 
\begin{equation} \label{eq extension HH^2}
0 \longrightarrow V \otimes_k E \stackrel{i}{\longrightarrow} F \stackrel{f}{\longrightarrow} G \stackrel{j}{\longrightarrow} E \longrightarrow 0,
\end{equation}
together with a class 
\[
[\mu] \in \quotient{\Hom_X(G \otimes  F, V \otimes_k \Oo_X)}{\left (\Hom_X(G \otimes G, V \otimes_k \Oo_X)^\pm \circ (\id_{G} \otimes f) \right )}
\]
whose elements satisfy
\begin{equation} \label{eq mu and phi}
\mu \circ (1_{G} \otimes i) = \id_V \otimes_k \phi(j \otimes 1_E),
\end{equation}
and
\begin{equation} \label{eq mu and nu}
\mu \circ (f \otimes \id_{F}) = \pm \mu \circ (f \otimes \id_{F}) \circ \theta_{F}.
\end{equation}

The zero element in $V \otimes_k \HH^2(\Delta^{\pm, \bullet}_{(E,\phi)})$ corresponds to a $2$-extension that splits
\begin{equation} \label{eq split ses}
0 \longrightarrow V \otimes_k E \stackrel{i}{\longrightarrow} F = (V \otimes_k E) \oplus \ker j \stackrel{f}{\longrightarrow} G \stackrel{j}{\longrightarrow} E \longrightarrow 0,
\end{equation}
and $[\mu]$ such that
\begin{equation} \label{eq condition for mu to lift}
[\mu] \circ (\id_{G} \otimes q) = 0  
\end{equation}
in $\Ext^1(E\otimes E, V \otimes_k \Oo_X)^\pm$, where $q$ denotes the projection $F \to \ker j$. 
\end{proposition}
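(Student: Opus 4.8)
The plan is to run, one cohomological degree higher, the same argument used in the proof of Proposition~\ref{pr description HH^1}. Since $\HH^{i+1}(\Delta^{\pm,\bullet}_{(E,\phi)})=\HH^{i}(C^\bullet(\Delta^{\pm}_{(E,\phi)}))$, the target is $V\otimes_k\HH^1(C^\bullet(\Delta^{\pm}_{(E,\phi)}))$, which is finite dimensional because $V$ is and all cohomologies of $\D E\otimes^L E$ and of $(\D E\otimes^L\D E)^\pm$ are. I would fix a locally free resolution $W^\bullet\xrightarrow{\pi}E\to 0$; then, just as in that proof, $C^\bullet(\Delta^{\pm}_{(E,\phi)})$ is in degrees $0,1,2$
\[
\begin{pmatrix}\Hom_X(W_{-1},E)\\ \Hom_X(W_0\otimes W_0,\Oo_X)^\pm\end{pmatrix}\xrightarrow{\ \delta_0\ }\begin{pmatrix}\Hom_X(W_{-2},E)\\ \Hom_X(W_0\otimes W_{-1},\Oo_X)\end{pmatrix}\xrightarrow{\ \delta_1\ }\begin{pmatrix}\Hom_X(W_{-3},E)\\ \Hom_X(W_0\otimes W_{-2},\Oo_X)\\ \Hom_X(W_{-1}\otimes W_{-1},\Oo_X)^\pm\end{pmatrix},
\]
with $\delta_0$ the differential already computed in Proposition~\ref{pr description HH^1} and $\delta_1$ produced by the same mapping-cone recipe: its middle component combines the $\phi$-twist $\Delta^{\pm}$ applied to the $\Hom_X(W_{-2},E)$-entry with precomposition with $\id_{W_0}\otimes\partial_{-2}$ on the $\Hom_X(W_0\otimes W_{-1},\Oo_X)$-entry, and its bottom component is the $\pm$-projection of precomposition with $\partial_{-1}\otimes\id_{W_{-1}}$. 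So a $1$-cocycle is a pair $(\overline\eta,\overline\Psi)$ with $\overline\eta\in\Hom_X(W_{-2},V\otimes_k E)$, $\overline\eta\circ\partial_{-3}=0$, and $\overline\Psi\in\Hom_X(W_0\otimes W_{-1},V\otimes_k\Oo_X)$ subject to two further identities coming from $\delta_1(\overline\eta,\overline\Psi)=0$, one of which will be the descent condition for the quadratic form and the other the ($\pm$)-(anti)symmetry.

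From such a cocycle I would build the $2$-extension and the class $[\mu]$ exactly as $F$ and $\Phi$ were built in Proposition~\ref{pr description HH^1}: set $G:=W_0$, $j:=\pi$, $F:=\big((V\otimes_k E)\oplus W_{-1}\big)\big/(\overline\eta\oplus\partial_{-2})(W_{-2})$, with $i$ the inclusion of $V\otimes_k E$ and $f$ induced by $\partial_{-1}:W_{-1}\to W_0$; exactness of \eqref{eq extension HH^2} is then a diagram chase using the resolution. I would define $\mu:G\otimes F\to V\otimes_k\Oo_X$ by $(\id_V\otimes_k\phi)\circ(j\otimes\id)$ on the $G\otimes(V\otimes_k E)$-part and by $\overline\Psi$ on the $G\otimes W_{-1}$-part; the descent identity satisfied by $(\overline\eta,\overline\Psi)$ is precisely the statement that this kills $G\otimes(\overline\eta\oplus\partial_{-2})(W_{-2})$, so $\mu$ descends and satisfies \eqref{eq mu and phi} by construction. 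Since $j\circ\partial_{-1}=0$, the composite $\mu\circ(f\otimes\id_F)$ only sees the $W_{-1}$-slots and is given by $\overline\Psi\circ(\partial_{-1}\otimes\id_{W_{-1}})$, so the remaining cocycle identity---the one living in $\Hom_X(W_{-1}\otimes W_{-1},\Oo_X)^\pm$---is exactly \eqref{eq mu and nu}. For well-definedness, a coboundary $\delta_0(\lambda,\chi)$ changes $\overline\eta$ by $\pm\lambda\circ\partial_{-2}$ and $\overline\Psi$ by $\pm\phi\circ(\pi\otimes\lambda)+\chi\circ(\id_{W_0}\otimes\partial_{-1})$; the $\lambda$-part is absorbed, as in \eqref{eq wt beta}--\eqref{eq Phi and Phi'}, by the triangular isomorphism $\left(\begin{smallmatrix}\id&\lambda\\0&\id\end{smallmatrix}\right)$ of $(V\otimes_k E)\oplus W_{-1}$, which descends to an isomorphism of $2$-extensions over the identities carrying $\mu$ to $\mu'$ modulo $\Hom_X(G\otimes G,V\otimes_k\Oo_X)^\pm\circ(\id_G\otimes f)$, while the $\chi$-part alters $\mu$ exactly by $\chi\circ(\id_G\otimes f)$ (as $f$ vanishes on $V\otimes_k E$ and is $\partial_{-1}$ on $W_{-1}$); thus $[\mu]$ in the stated quotient is unchanged. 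Compatibility of a general morphism $(a,b)$ of $2$-extensions with that quotient uses $f'a=bf$, and independence of the chosen $W^\bullet$ is the standard comparison of resolutions.

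For the converse, given a $2$-extension \eqref{eq extension HH^2} together with a compatible class $[\mu]$, I would pick $W^\bullet\to E$ and a $1$-cochain $\overline\eta\in\Hom_X(W_{-2},V\otimes_k E)$ representing the Yoneda class of the $2$-extension in $\Ext^2_X(E,V\otimes_k E)$; then $\overline\eta\circ\partial_{-3}=0$, a Yoneda equivalence identifies the $2$-extension with the standard one built from $\overline\eta$, and pulling $\mu$ back along $\id_{W_0}\otimes\big((V\otimes_k E)\oplus W_{-1}\twoheadrightarrow F\big)$ and restricting to $W_0\otimes W_{-1}$ yields $\overline\Psi$; conditions \eqref{eq mu and phi} and \eqref{eq mu and nu}, together with the fact that $\mu$ is genuinely defined on $G\otimes F$, are exactly what forces $(\overline\eta,\overline\Psi)$ to be a cocycle, and equivalent data produce cohomologous cocycles, so the correspondence is a bijection. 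For the zero class, the zero cocycle has $\overline\eta=0$, and exactness of the resolution gives $W_{-1}/\partial_{-2}(W_{-2})\cong\ker\pi=\ker j$, so $F=(V\otimes_k E)\oplus\ker j$ with $f$ the inclusion of the second summand---this is \eqref{eq split ses}---and $\overline\Psi=0$; conversely, a split $2$-extension with $[\mu]$ represents the zero class iff the induced $\overline\Psi$ can be written as $\pm\phi\circ(\pi\otimes\lambda)+\chi\circ(\id_{W_0}\otimes\partial_{-1})$ for some $\lambda$ with $\lambda\circ\partial_{-2}=0$ and some $\chi\in\Hom_X(W_0\otimes W_0,V\otimes_k\Oo_X)^\pm$, and unwinding this along $0\to\ker j\to G\xrightarrow{j}E\to 0$ is precisely condition \eqref{eq condition for mu to lift}.

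The difficulty I expect is twofold. First is sign bookkeeping: in $\delta_1$ and in the shift by $-1$, the two extra cocycle identities must come out to be \emph{exactly} \eqref{eq mu and phi} and \eqref{eq mu and nu}, and the coboundary formula must split cleanly into an isomorphism-of-$2$-extensions term plus a $\Hom_X(G\otimes G,V\otimes_k\Oo_X)^\pm\circ(\id_G\otimes f)$ term, with no stray factors---a careless computation would corrupt exactly these. Second, more conceptual, is the zero-element claim: one must identify precisely the map from the relevant quotient of $\Hom_X(G\otimes\ker j,V\otimes_k\Oo_X)$ into $\Ext^1_X(E\otimes E,V\otimes_k\Oo_X)^\pm$ induced by $0\to\ker j\to G\xrightarrow{j}E\to 0$, and check that its kernel is exactly the image of the coboundaries fixing $\overline\eta=0$, so that \eqref{eq condition for mu to lift} is an honest if-and-only-if; this is the step I would budget the most care for.
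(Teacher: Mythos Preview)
Your proposal is correct and follows essentially the same route as the paper's own proof: compute $\HH^1$ of the mapping cone using a locally free resolution $W^\bullet\to E$, identify a $1$-cocycle $(\overline\chi,\overline\Xi)$ (your $(\overline\eta,\overline\Psi)$) with the data $G=W_0$, $F=\big((V\otimes_kE)\oplus W_{-1}\big)/(\overline\chi\oplus\partial_{-2})(W_{-2})$ and $\mu$ built from $\phi(\pi\otimes\id)$ and $\overline\Xi$, then check that the two halves of a coboundary $\delta_0(\lambda,\chi)$ respectively yield an isomorphism of $2$-extensions via the triangular automorphism and a modification of $\mu$ by $\chi\circ(\id_G\otimes f)$; the converse and the zero-class analysis are handled the same way. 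Your caution about signs is well placed: note in particular that the degree-$2$ term of the cone carries $\Hom_X(W_{-1}\otimes W_{-1},\Oo_X)^{\mp}$ (not $^{\pm}$), so that the vanishing in that slot is precisely the $\pm$-symmetry condition \eqref{eq mu and nu}.
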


\begin{proof}
We study $V \otimes_k \HH^1\left (C^\bullet \left ( \Delta^\pm_{(E,\phi)} \right ) \right )$. Using the locally free resolution $W^\bullet \stackrel{\pi}{\longrightarrow} E \to 0$, the mapping cone of $\Delta^{\pm}_{(E,\phi)}$ in degrees $1$ and $2$ is given by 
\[
\begin{pmatrix} \Hom_X(W_{-2}, E) \\ \Hom_X(W_0 \otimes W_{-1}, \Oo_X) \end{pmatrix} \stackrel{\delta_1}{\longrightarrow} 
\begin{pmatrix} \Hom_X(W_{-3}, E) \\ \Hom_X(W_{-1} \otimes  W_{-1}, \Oo_X)^\mp \\ \Hom_X(W_0 \otimes W_{-2}, \Oo_X) \end{pmatrix},
\]
where
\[
\delta_1 = \begin{pmatrix}
-(\bullet) \circ \partial_{-3} & 0
\\
0 &  (\bullet) \circ (\partial_{-1} \otimes \id_{W_{-1}}) \circ (\id_{W_{-1}} \mp \theta_{W_{-1}} ) 
\\
\phi \circ (\pi \otimes (\bullet)) & - (\bullet) \circ (\id_{W_0} \otimes \partial_{-2})
\end{pmatrix}.
\]
Then, 
\[
\ker(\delta_1) = \left \{ \begin{matrix} (\chi, \Xi) \in \Hom_X(W_{-2}, E) \oplus \Hom_X(W_0 \otimes W_{-1}, \Oo_X)  \\   \chi (\partial_{-3}(W_{-3})) = 0 \\ \Xi \circ (\partial_{-1} \otimes \id_{W_{-1}}) = \pm \, \Xi \circ (\partial_{-1} \otimes \id_{W_{-1}}) \circ \theta_{W_{-1}} \\ \phi \circ (\pi \otimes \chi) - \Xi \circ (\id_{W_0} \otimes \partial_{-2}) = 0 \end{matrix} \right \}.
\]
Using $\left (\overline{\chi}, \overline{\Xi} \right ) \in V \otimes_k \ker(\delta_1)$, where $\overline{\chi} \in \Hom_X(W_{-1}, V \otimes_k E)$ and $\overline{\Xi} \in \Hom_X(W_0 \otimes W_{-1}, V \otimes_k \Oo_X)$ consider the projection $j: G \to E$ to be $\pi : W_0 \to E$ and let us construct 
\begin{equation} \label{eq k-definition of wt E}
F := (V \otimes_k E) \oplus W_{-1} / \left (\overline{\chi} \oplus \partial_{-2} \right )(W_{-2})
\end{equation}
and consider the injection
\[
\morph{E}{F = (V \otimes_k E) \oplus W_{-1} / \left (\overline{\chi} \oplus \partial_{-2} \right )(W_{-2})}{\overline{e}}{[(\overline{e}, 0)]}{}{i}
\]
and the morphism
\[
\morph{F = (V \otimes_k E) \oplus W_{-1} / \left (\overline{\chi} \oplus \partial_{-2} \right )(W_{-2})}{G = W_{0}}{[(\overline{e}, w)]}{\partial_{-1}(w),}{}{f}
\]
which is well defined since $\partial_{-1} \circ \partial_{-2} = 0$. Note that we have obtained a $2$-extension of the form of \eqref{eq extension HH^2}.

Consider now 
\[
\id_V \otimes_k \phi \circ (\pi \otimes \id_E) - \overline{\Xi} : W_0 \otimes ((V \otimes_k E) \oplus W_{-1}) \longrightarrow V \otimes_k \Oo_X.
\]
Since $(\overline{\chi}, \overline{\Xi}) \in V \otimes_k \ker(\delta_1)$ satisfy $\id_V \otimes_k \phi \circ (\pi \otimes \overline{\chi}) - \overline{\Xi} \circ (\id_{W_0} \otimes \partial_{-2}) = 0$, it follows that $\id_V \otimes_k \phi \circ (\pi \otimes \id_E) - \overline{\Xi}$ vanishes at $W_0 \otimes ( (\overline{\chi} \oplus \partial_{-2})(W_{-2}))$, hence it descends to
\[
\mu : G \otimes F \to V \otimes_k \Oo_X,
\]
where we recall \eqref{eq k-definition of wt E} and that $G = W_0$. Since $\id_V \otimes_k \phi \circ (\pi \otimes \id_E) - \overline{\Xi}$ restricted to $G\otimes (V \otimes_k E \oplus 0)$ amounts to $\id_V \otimes_k \phi \circ (\pi \otimes \id_E)$, \eqref{eq mu and phi} follows naturally. Note also that 
\[
(\id_V \otimes_k \phi \circ (\pi \otimes \id_E) - \overline{\Xi}) \circ (\partial_{-1} \otimes \id_{(V \otimes_k E) \oplus W_{-1}}) = \overline{\Xi} \circ (\partial_{-1} \otimes \id_{W_{-1}}).
\]
Then \eqref{eq mu and nu} follows from the identity $\overline{\Xi} \circ (\partial_{-1} \otimes \id_{W_{-1}}) = \pm \, \overline{\Xi} \circ (\partial_{-1} \otimes \id_{W_{-1}}) \circ \theta_{W_{-1}}$ that any $(\overline{\chi}, \overline{\Xi}) \in V \otimes_k \ker(\delta_1)$ satisfies.

For any $\overline{\eta} \in V \otimes_k \Hom_X(W_{-2}, E)$, we set
\[
(\overline{\chi}', \overline{\Xi}') = (\overline{\chi}, \overline{\Xi}) + (\id_V \otimes_k \delta_0) \cdot (\overline{\eta}, 0) = \left ( \overline{\chi} - \overline{\eta} \circ \partial_{-2}, \overline{\Xi} + \id_V \otimes_k \phi \circ ( \pi \otimes \eta) \right ),
\]
and define
\[
F' = (V \otimes_k E) \oplus W_{-1} / (\overline{\chi}' \oplus \partial_{-2})(W_{-2}). 
\]
Observe that the isomorphism
\begin{equation} \label{eq wt alpha}
\begin{pmatrix} \id_{V \otimes_k E} & -\overline{\eta} \\ 0 & \id_{W_0} \end{pmatrix} : (V \otimes_k E) \oplus W_{-1} \stackrel{\cong}{\longrightarrow} (V \otimes_k E) \oplus W_{-1}.
\end{equation}
sends $(\overline{\chi} \oplus \partial_{-2})(W_{-2})$ to $(\overline{\chi}' \oplus \partial_{-2})(W_{-2})$, hence \eqref{eq wt alpha} provides an isomorphism 
\[
\eta_F : F \stackrel{\cong}{\longrightarrow} F'.
\]
One obtains a commutative diagram
\[
\xymatrix{
0 \ar[r] & V \otimes_k E \ar[r] \ar@{=}[d] & F \ar[r] \ar[d]^{\eta_F}_{\cong} & G = W_0 \ar[r] \ar@{=}[d] & E \ar[r] \ar@{=}[d] & 0 
\\
0 \ar[r] & V \otimes_k E \ar[r] & F' \ar[r] & G' = W_0 \ar[r] & E \ar[r] & 0,
}
\]
so both $\overline{\chi}$ and $\overline{\chi}'$ define the same class of extensions. Note also that $\id_V \otimes_k \phi \circ (\pi \otimes \id_E) - \overline{\Xi}'$ vanishes in $(\overline{\chi}' \oplus \delta_{-2})(W_{-2})$, defining $\mu': G' \otimes F' \to V \otimes_k \Oo_X$ satisfying \eqref{eq mu and phi} and \eqref{eq mu and nu}. Note also that
\[
\id_V \otimes_k \phi \circ (\pi \otimes \id_E) - \overline{\Xi}' = \id_V \otimes_k (\phi \circ (\pi \otimes \id_E) - \overline{\Xi}) \circ  \begin{pmatrix} \id_{V \otimes_k E} & -\overline{\eta} \\ 0 & \id_{W_0} \end{pmatrix},
\]
so one gets
\[
\mu' \circ (\id_{G} \otimes \eta_1) = \mu.
\]
Therefore, the $2$-extension and the morphism that we obtain from $(\overline{\chi}, \overline{\Xi})$ are equivalent to the $2$-extension and the morphism that we obtain from $(\overline{\chi}', \overline{\Xi}')$. 

Take now $\overline{\Psi} \in V \otimes_k \Hom_X(W_0 \otimes W_0, \Oo_X)^\pm$ and define 
\[
\left ( \overline{\chi}, \overline{\Xi}'' \right ) = \left ( \overline{\chi}, \overline{\Xi} \right ) + (\id_V \otimes_k \delta_0) \cdot \left ( 0, \overline{\Psi} \right ) = \left (\overline{\chi}, \overline{\Xi} + \overline{\Psi} \circ (\id_{W_0} \otimes \partial_{-1}) \right ).
\]
Since $\overline{\chi}$ does not change, we get $F$ and $G$ as before. We observe that $\id_V \otimes_k \phi \circ (\pi \otimes \id_E) - \overline{\Xi}''$ descends to $\mu'' = \mu + \overline{\Psi} \circ (\id_{G} \otimes f)$, so $[\mu''] = [\mu]$. Observe that $\mu''$ also satisfies \eqref{eq mu and phi} and \eqref{eq mu and nu}.

Conversely, choose representants \eqref{eq extension HH^2} and $\mu: G \otimes F \to V \otimes_k \Oo_X$ of a given $2$-extension class of $E$ by $E$, and of a certain class in $\Hom_X(G \otimes  F, V \otimes_k \Oo_X)/\Hom_X(G \otimes G, V \otimes_k \Oo_X)^\pm \circ (\id_{W_0} \otimes f)$  satisfying \eqref{eq mu and phi} and \eqref{eq mu and nu}. Using the universal property of projective modules (recall that locally free sheaves are projective) one can always complete to a commutative diagram
\begin{equation} \label{eq construction of chi}
\xymatrix{
0 \ar[r] & W_{-2} / \partial_{-3}(W_{-3}) \ar[r] \ar[d]^{\overline{\chi}} & W_{-1} \ar[r] \ar[d]^{\chi_F} \ar[r] & W_0 \ar[r] \ar[d]^{\chi_G} & E \ar[r] \ar@{=}[d] & 0
\\
0 \ar[r] & V \otimes_k E \ar[r] & F \ar[r] & G \ar[r] & E \ar[r] & 0.
}
\end{equation}
This defines a morphism $\overline{\chi} : W_{-2} \to V \otimes_k E$ with $\overline{\chi}( \partial_{-3}(W_{-3})) = 0$. Let us denote the composition of $\overline{\Xi} := \mu \circ (\chi_G, \chi_F)$. Note that the commutativity of \eqref{eq construction of chi} together with \eqref{eq mu and phi} and \eqref{eq mu and nu} imply, respectively, that 
\[
\id_V \otimes_k \phi \circ (\pi \otimes \overline{\chi}) - \overline{\Xi} \circ (\id_{W_0} \otimes \partial_{-2}) = 0
\]
and
\[
\overline{\Xi} \circ (\partial_{-1} \otimes \id_{W_{-1}}) = \pm \, \overline{\Xi} \circ (\partial_{-1} \otimes \id_{W_{-1}}) \circ \theta_{W_{-1}}.
\]
This completes the proof of the first statement.

To describe the $0$ element in $V \otimes_k \HH^1\left (C^\bullet \left ( \Delta^{\pm}_{(E,\phi)} \right ) \right )$, we first note that any $\left (0,\overline{\Xi} \right ) \in V \otimes_k \ker(\delta_1)$ gives
\[
F = (V \otimes_k E) \oplus W_{-1} / (0 \oplus \delta_{-2})(W_{-2}) = (V \otimes_k E) \oplus \ker \pi,
\]
with $\overline{\Xi}$ satisfying 
\[
\overline{\Xi} \circ (\partial_{-1} \otimes \id_{W_{-1}}) = \pm \, \overline{\Xi} \circ (\partial_{-1} \otimes \id_{W_{-1}}) \circ \theta_{W_{-1}} 
\]
and
\[ 
\overline{\Xi} \circ (\id_{W_0} \otimes \partial_{-2}) = 0.
\]
Therefore, $\left (0,\overline{\Xi} \right ) \in \ker(\delta_1)$ determines a short exact sequence of the form \eqref{eq split ses} together with an element $\left [\, \overline{\Xi} \, \right ] \in \Ext^1_X(E \otimes E, V \otimes_k \Oo_X)^{\pm}$. If $\mu$ is the descent of $\id_V \otimes \phi \circ (\pi \otimes \id_E) - \overline{\Xi} : W_0 \otimes (V \otimes_k E \oplus W_{-1}) \to V \otimes_k \Oo_X$ to a morphism in $\Hom_X(W_0 \otimes \ker(\pi), V \otimes_k \Oo_X)^{\pm}$, observe that $\mu \circ (\id_{G} \otimes q)$ coincides with $\overline{\Xi}$. This concludes the proof.
\end{proof}

\section{Deformation of orthogonal and symplectic sheaves}
\label{sc deformation}

Given an orthogonal sheaf $(E, \phi)$ over the projective scheme $X$, we define its deformation functor
\[
\Def^{\, +}_{(E,\phi)} : \Art \longrightarrow \Sets
\]
by associating to any $A \in \Art$ the set of isomorphism classes of triples $(\Ee, \Phi, \gamma)$, where $(\Ee,\Phi)$ is a family of orthogonal sheaves on $X_A$, (so $\Ee$ is a torsion-free coherent sheaf on $X_A$ flat over $\Spec(A)$), and $\gamma: (\Ee, \Phi)|_X \to (E,\phi)$ is an isomorphism of orthogonal (resp. symplectic) sheaves. Two triples $(\Ee, \Phi, \gamma)$ and $(\Ee', \Phi', \gamma')$ are isomorphic if there exists an isomorphism $f : (\Ee,\Phi) \to (\Ee',\Phi')$ such that $\gamma' \circ (f|_X) = \gamma$. As in the case of the deformation functor of coherent sheaves, functoriality of $\Def^{\, +}_{(E,\phi)}$ follows from applying pull-backs under the morphisms $p_{a}:X_{A'} \to X_A$ for any $a : A \to A'$.

Analogously, associated to any symplectic sheaf $(E, \phi)$ over $X$, its deformation functor
\[
\Def^{\, -}_{(E,\phi)} : \Art \longrightarrow \Sets
\]
is constructed by associating to every $A \in \Art$ the set of isomorphism classes of triples $(\Ee, \Phi, \gamma)$, where $(\Ee,\Phi)$ is a family of symplectic sheaves on $X_A$, and $\gamma: (\Ee, \Phi)|_X \to (E,\phi)$ is an isomorphism of symplectic sheaves. The notion of isomorphism of triples is analogous to the case of orthogonal sheaves. As before, functoriality under pull-backs holds in this case as well.

We see that the complex $\Delta^{\pm,\bullet}_{(E,\phi)}$ governs the deformation theory of $\Def^{\, \pm}_{(E,\phi)}$.

\begin{theorem}
Let $(E,\phi)$ be an orthogonal ({\it resp.} symplectic) sheaf over the smooth projective scheme $X$. Then the deformation functor $\Def^{\, +}_{(E,\phi)}$ ({\it resp.} $\Def^{\, -}_{(E,\phi)}$) admits a miniversal pro-family and the associated space of first-order deformations is
\begin{equation} \label{eq description of 1st def^+}
\Def^{\, \pm}_{(E,\phi)}(k[\epsilon]/(\epsilon^2)) \cong \HH^1 \left ( \Delta^{\pm,\bullet}_{(E,\phi)} \right ).
\end{equation}
If, further, $(E,\phi)$ is simple, $\Def^{\, \pm}_{(E,\phi)}$ is pro-representable.
%
\end{theorem}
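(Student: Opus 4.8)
The plan is to verify Schlessinger's conditions for $\F := \Def^{\,\pm}_{(E,\phi)}$: conditions \textbf{S1}, \textbf{S2}, \textbf{S3} give, by \cite{schlessinger}, a miniversal pro-family, and \textbf{S4} (under simplicity) upgrades this to pro-representability. The guiding idea is that a triple $(\Ee,\Phi,\gamma)$ is a coherent sheaf $\Ee$, whose infinitesimal theory is the one recalled in Section \ref{sc coherent sheaves}, together with the morphism $\Phi$, which is again a coherent datum and hence behaves functorially under the same base-change and gluing operations, subject only to the symmetry/antisymmetry and non-degeneracy constraints.

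\textbf{S1 and S2.} Given a small extension $0\to H\to B\to A\to 0$ and a morphism $C\to A$ in $\Art$, one has $X_{B\times_A C}=X_B\times_{X_A}X_C$ and $\Oo_{X_{B\times_A C}}=\Oo_{X_B}\times_{\Oo_{X_A}}\Oo_{X_C}$. From triples over $B$ and $C$ inducing isomorphic triples over $A$, Grothendieck's construction for coherent sheaves produces $\Ee:=\Ee_B\times_{\Ee_A}\Ee_C$, flat over $B\times_A C$, restricting to $\Ee_B$, $\Ee_C$ and, over the closed point, to $E$; in particular $\Ee$ is torsion-free and $U_\Ee=U_E\times\Spec(B\times_A C)$. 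The forms agree over $A$ and glue to $\Phi:\Ee\otimes\Ee\to\Oo_{X_{B\times_A C}}$, which inherits symmetry (resp. antisymmetry) and whose restriction to $U_\Ee$ is non-degenerate since this is detected by the invertibility of a determinant, which holds because it does over $B$ and $C$. This gives surjectivity in \textbf{S1}; taking $C=k\langle V\rangle$ and using uniqueness of the gluing gives the bijection in \textbf{S2}. Since any two triples over $k$ are isomorphic, $\F(k)=\{\mathrm{pt}\}$.

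\textbf{S3 and \eqref{eq description of 1st def^+}.} By \textbf{S2} the set $\F(k[\epsilon]/(\epsilon^2))$ is a $k$-vector space, and I claim it equals $\HH^1(\Delta^{\pm,\bullet}_{(E,\phi)})$. A first-order deformation is a flat family $(\Ee,\Phi)$ over $k[\epsilon]/(\epsilon^2)$ with $\gamma:(\Ee,\Phi)|_X\cong(E,\phi)$; regarding $\Ee$ as an $\Oo_X$-module it is an extension $0\to E\to\Ee\to E\to 0$ with $\epsilon$ acting as $i\circ j$, and, decomposing $\Phi$ according to $\Oo_{X_A}=\Oo_X\oplus\epsilon\Oo_X$, the $k[\epsilon]/(\epsilon^2)$-bilinearity of $\Phi$ together with $\Phi|_X=\phi$ forces the lower component to be $\phi$ and makes the upper component a map $(\Ee\otimes\Ee)/I\to\Oo_X$ satisfying precisely \eqref{eq Phi is pm} and \eqref{eq Phi restricts to phi}; conversely such data assemble into a family. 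Hence $\F(k[\epsilon]/(\epsilon^2))$ is the set described in Proposition \ref{pr description HH^1} with $V=k$, namely $\HH^1(\Delta^{\pm,\bullet}_{(E,\phi)})$, which is finite-dimensional because it sits in the hypercohomology long exact sequence of the triangle defining $\Delta^{\pm,\bullet}_{(E,\phi)}$, between the finite-dimensional spaces $\HH^\bullet(\D E\otimes^L E)$ and $\HH^\bullet\big((\D E\otimes^L\D E)^{\pm}\big)$, $X$ being projective and $E$ coherent. Thus \textbf{S1}--\textbf{S3} hold and $\F$ admits a miniversal pro-family.

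\textbf{S4 under simplicity.} Following the standard pattern for moduli of sheaves ($\Def_E$ is pro-representable exactly when $E$ is simple), the surjections of \textbf{S4} are bijective as soon as no deformation carries infinitesimal automorphisms acting nontrivially on isomorphism classes; it therefore suffices that $\HH^0(\Delta^{\pm,\bullet}_{(E,\phi)})=0$. By Proposition \ref{pr description of HH^0}, an element of $\HH^0(\Delta^{\pm,\bullet}_{(E,\phi)})$ is a $\lambda\in\Hom_{\Coh}(E,E)$ with $\phi\circ(\id_E\otimes\lambda)+\phi\circ(\lambda\otimes\id_E)=0$, and for such $\lambda$ the endomorphism $\id_E+\epsilon\lambda$ of $(E\otimes_k k[\epsilon]/(\epsilon^2),\phi)$ is an automorphism restricting to $\id_E$; a nonzero $\lambda$ would thus give automorphisms beyond $\{\pm\id_E\}$, contradicting simplicity, so $\HH^0(\Delta^{\pm,\bullet}_{(E,\phi)})=0$ and \textbf{S4} holds. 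The step I expect to require the most care is the identification of $\F(k[\epsilon]/(\epsilon^2))$ with the data of Proposition \ref{pr description HH^1}, i.e. checking that $\Oo_{X_A}$-bilinearity of $\Phi$ translates exactly into \eqref{eq Phi is pm}--\eqref{eq Phi restricts to phi}; the gluing arguments for \textbf{S1}--\textbf{S2} are routine once the coherent case is invoked, and \textbf{S4} is then formal.
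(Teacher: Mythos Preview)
Your treatment of \textbf{S1}--\textbf{S3} is correct and follows essentially the same route as the paper: reduce to the classical deformation theory of the underlying sheaf and glue the quadratic forms along $X_A$; for \textbf{S3}, identify first-order deformations with the data of Proposition~\ref{pr description HH^1} via the $\Oo_X$-module extension $0\to E\to\Ee\to E\to 0$ determined by $\epsilon$.

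The gap is in your argument for \textbf{S4}. You claim that a nonzero $\lambda\in\HH^0(\Delta^{\pm,\bullet}_{(E,\phi)})$ yields the automorphism $\id_E+\epsilon\lambda$ of the trivial family over $k[\epsilon]/(\epsilon^2)$, ``contradicting simplicity''. But simplicity, as defined in the paper, is the statement $\Aut(E,\phi)=\{\pm\id_E\}$ \emph{over $k$}; what you have produced is a $k[\epsilon]/(\epsilon^2)$-point of the automorphism \emph{group scheme}, and nothing in the definition prevents that scheme from having a nontrivial tangent space at the identity. In other words, you are implicitly assuming that simplicity of $(E,\phi)$ forces simplicity of its trivial thickening to $X_{k[\epsilon]/(\epsilon^2)}$, and that is precisely the nontrivial step.

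The paper closes this gap with Lemma~\ref{lm surjection of automorphisms groups}, proved via Nakayama's lemma applied to the finitely generated $A$-module $H^0(X_A,\End(\Ee_A))$: it shows that if $(E,\phi)$ is simple then every deformation $(\Ee_A,\Phi_A)$ over any $A\in\Art$ satisfies $\Aut(\Ee_A,\Phi_A)=\{\pm\id_{\Ee_A}\}$. With this in hand, \textbf{S4} follows directly from \textbf{S1}, since the only automorphisms over $A$ are $\pm\id_{\Ee_A}$, which obviously lift to $B$. Your route via $\HH^0=0$ would also work, but establishing that implication from simplicity is exactly the case $A=k[\epsilon]/(\epsilon^2)$ of the same lemma; you cannot bypass it.
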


\begin{proof}
Given a torsion-free sheaf $E$ with $\phi : E \otimes E \to \Oo_X$ satisfying $\phi = \pm \phi \circ \theta_E$, we have to check whether $\Def^{\, \pm}_{(E,\phi)}$ satisfies the Schlessinger conditions \textbf{S1}, \textbf{S2} and \textbf{S3}.   
Let us denote by $b : B \times_A C \to B$ the projection to the first factor and by $c: B \times_A C \to C$ the projection to the second. Consider the associated morphisms $p_b : X_B \to X_{ B \times_A C}$ and $p_c : X_{ B \times_A C} \to X_C$. Condition \textbf{S1} holds if for any homomorphism $B \to A$ and any small extension $0 \to H \to C \to A \to 0$ in $\Art$, the morphism induced by taking pull-backs under $p_b$ and $p_c$,
\[
\Def^{\, \pm}_{(E,\phi)}(B \times_A C) \longrightarrow \Def^{\, \pm}_{(E,\phi)}(B) \times_{\Def^{\, \pm}_{(E,\phi)}(A)} \Def^{\, \pm}_{(E,\phi)}(C),
\]
is surjective. Consider $(\Ee_B, \Phi_B,\gamma_B) \in \Def^{\, \pm}_{(E,\phi)}(B)$ and $(\Ee_C,\Phi_C, \gamma_C) \in \Def^{\, \pm}_{(E,\phi)}(C)$ for which there exists an isomorphism $f : \Ee_B|_{X_A} \to \Ee_C|_{X_A}$ satisfying that $\gamma_B = \gamma_C \circ f|_{X}$ and $\Phi_B|_{X_A} = \Phi_C|_{X_A} \circ (f \otimes f)$. Thanks to deformation theory of sheaves we know that $\Def_E$ verifies $\S1$, so 
\[
\Def_E(B \times_A C) \longrightarrow \Def_E(B) \times_{\Def_E(A)} \Def_E(C),
\]
is surjective and there exists $(\Ee', \gamma) \in \Def_E(B \times_A C)$ such that $g_B : (\Ee', \gamma)|_{X_B} \stackrel{\cong}{\to} (\Ee_B, \gamma_B)$ and $g_C: (\Ee', \gamma)|_{X_C} \stackrel{\cong}{\to} (\Ee_C, \gamma_C)$ satisfy that $g_C|_{X_A} = f \circ g_B|_{X_A}$. It remains to construct a quadratic form on $\Ee'$ compatible with $\Phi_B$ and $\Phi_C$ under pull-backs.

Write $\Ee'_B$, $\Ee'_C$ and $\Ee'_A$ for $\Ee'|_{X_B}$, $\Ee'|_{X_C}$ and $\Ee'|_{X_A}$ respectively. One trivially has that 
\begin{equation} \label{eq decomposition of Ff}
\Ee' = \Ee'_B \times_{\Ee'_A} \Ee'_C, 
\end{equation}
so
\[
\Ee' \otimes \Ee' = (\Ee'_B \otimes \Ee'_B)  \times_{(\Ee'_A \otimes \Ee'_A)} (\Ee'_C \otimes \Ee'_C). 
\]
Since $g_C|_{X_A} = f \circ g_B|_{X_A}$ and $\Phi_B|_{X_A} = \Phi_C|_{X_A} \circ (f \otimes f)$, it follows that 
\[
(g_B \otimes g_B)^*\Phi_B |_{X_A} = (g_C \otimes g_C)^*\Phi_C |_{X_A}.
\]
Then, they define 
\[
\Phi : \Ee' \otimes \Ee' \longrightarrow \Oo_{X_{B \times_A C}}, 
\]
which naturally satisfies $p_b^* \Phi = \Phi_B$ and $p_c^*\Phi = \Phi_C$. By hypothesis, one has that $p_b^*\Phi_B = \pm p_b^*\Phi_B \circ \theta_{\Ee'_B}$ and similarly for $C$ and $A$. It then follows that 
\[
\Phi = \pm \Phi \circ \theta_{\Ee'}.
\]
Recall that $U_{\Ee'}$ the open subset of $X_{(B \times_A C)}$ where is $\Ee'$ is locally free. Thanks to \eqref{eq decomposition of Ff}, one has that 
\[
U_{\Ee'} = U_{\Ee'_B} \times_{U_{\Ee'_A}} U_{\Ee'_C}.
\]
By hypothesis, $(g_B \times g_B)^*\Phi_B$ and $(g_C \times g_C)^*\Phi_C$ and are non-degenerate over $U_{\Ee_B} = U_{\Ee'_B}$ and $U_{\Ee_C} = U_{\Ee'_C}$. Since $\Phi$ is constructed by gluing $(g_B \times g_B)^*\Phi_B$ and $(g_C \times g_C)^*\Phi_C$, it then follows that $\Phi$ is non-degenerate over $U_{\Ee'}$. This proves that $\Def^{\, \pm}_{(E,\phi)}$ satisfies \textbf{S1}.

Since $\Def_E$ satisfies the condition \textbf{S2}, to prove that $\Def^{\, \pm}_{(E,\phi)}$ also full-fills this condition it is enough to see that, when $A = k$, given a quadratic form $\Phi'$ on $\Ee'$ compatible with $\gamma$ and such that $\Phi'|_{\Ee'_B \otimes \Ee'_B} = (h_B \otimes h_B)^*\Phi|_{\Ee'_B \otimes \Ee'_B}$ for an automorphism $h_B$ of $(\Ee'_B, \gamma|_{\Ee'_B})$ and $\Phi'|_{\Ee'_C \otimes \Ee'_C} = (h_C \otimes h_C)^*\Phi|_{\Ee'_C \otimes \Ee'_C}$ under an automorphism $h_C$ of $(\Ee'_C, \gamma|_{\Ee'_C})$, one can construct an automorphism $h$ of $(\Ee', \gamma)$ sending $\Phi'$ to $\Phi$. Since $h_B|_{X} = \gamma|_{X} = h_C|_{X}$, it follows from \eqref{eq decomposition of Ff} that one can construct $h : \Ee' \to \Ee'$ by gluing $h_B$ and $h_C$ along $X$. It is straight-forward that $h$ satisfies the required condition so $\Def^{\, \pm}_{(E,\phi)}$ satisfies \textbf{S2}. 
 
We address \textbf{S3} now. First, note that one can endow the space of first-order deformations $\Def^{\, \pm}_{(E,\phi)}(k\langle k \rangle)$ with a $k$-vector space structure. Using the inverse of the bijective map of \textbf{S2} when $B = k\langle k \rangle$, and the morphism $\langle + \rangle : k\langle k \oplus k \rangle \to k\langle k\rangle$ induced by the sum of the elements in the maximal ideal, one can define the sum within the space of first-order deformations,
\[
\xymatrix{
\Def^{\, \pm}_{(E,\phi)}(k\langle k \rangle) \times \Def^{\, \pm}_{(E,\phi)}(k\langle k \rangle) \ar[r]^{\qquad \, \, \, 1:1} & \Def^{\, \pm}_{(E,\phi)}(k\langle k \oplus k \rangle) \ar[rr]^{\quad \Def^{\, \pm}_{(E,\phi)}(\langle + \rangle)} & & \Def^{\, \pm}_{(E,\phi)}(k\langle k \rangle).
}
\]
One can easily check that $\Def^{\, \pm}_{(E,\phi)}(k\langle k \rangle)$ equipped with this sum satisfies the axioms of a $k$-vector space. 

Recall now that $k\langle k \rangle = k[\epsilon]/(\epsilon^2)$. Suppose that $M$ is a $\Oo_{X} \times_k k[\epsilon]/(\epsilon^2)$-module. Since $\Oo_{X} \times_k k[\epsilon]/(\epsilon^2) = \Oo_X + \epsilon \Oo_X$ with $\epsilon^2 = 0$, we see that a $\Oo_{X} \times_k k[\epsilon]/(\epsilon^2)$-module structure is determined by a $\Oo_X$-module structure and the action of $\epsilon$ on it, $\epsilon k \times_k M \to k \times_k M$. Thanks to Proposition \ref{pr description HH^1}, one can then consider the map
\begin{equation} \label{eq tangent to Def_E phi}
\HH^1(\Delta^{\pm}_{(E,\phi)}) \longrightarrow \Def^{\, \pm}_{(E,\phi)}(k[\epsilon]/(\epsilon^2))
\end{equation}
that sends the short exact sequence of $\Oo_X$-modules $0 \to E \stackrel{i}{\to} F \stackrel{j}{\to} E \to 0$ and $\Phi_1 : F \otimes F \to \Oo_X$ to the triple $(\Ee, \Phi, \gamma)$ where $\Ee$ is $F$ endowed with a $\Oo_{X} \times_k k[\epsilon]/(\epsilon^2)$-module structure determined by the usual $\Oo_X$-module structure on $F$ and the action of $\epsilon$ on $\Ee$ defined by the composition $j \circ i : E \to E$. The isomorphism $\gamma : \Ee|_X \to E$ is determined by the projection $j : F \to E$ and $\Phi$ is naturally determined by $\Phi_1$. 

Conversely, given the isomorphism class of $(\Ee, \Phi, \gamma)$ in $\Def^{\, \pm}_{(E,\phi)}(k[\epsilon]/(\epsilon^2))$, we construct an exact sequence $0 \to E \stackrel{i}{\to} F \stackrel{j}{\to} E \to 0$. Tensorize $\Ee$ with $0 \to k \to k[\epsilon]/(\epsilon^2) \to k \to 0$ to obtain
\[
0 \longrightarrow k \otimes_{k[\epsilon]/(\epsilon^2)} \Ee  \cong E \stackrel{i'}{\longrightarrow} \Ee \stackrel{j'}{\longrightarrow} k \otimes_{k[\epsilon]/(\epsilon^2)} \Ee \cong E   \longrightarrow 0.
\]
Then consider the push-forward under the natural projection $\pi : X \times_k \Spec (k[\epsilon]/(\epsilon^2)) \to X$ gives a short exact sequence given by $F = \pi_*\Ee$, the projection $j : F \to E$ is determined by the composition $\gamma \circ \pi_*j'$ and $\Phi_1 = (\pi \otimes \pi)^*\Phi$. This provides an inverse for \eqref{eq tangent to Def_E phi} so \textbf{S3} is satisfied and \eqref{eq description of 1st def^+} holds.

Finally, we address \textbf{S4} for $(E,\phi)$ simple. Observe that it follows naturally from \textbf{S1} and Lemma \ref{lm surjection of automorphisms groups}. 
\end{proof}

The following result provides a characterization of simple orthogonal and symplectic sheaves on $X_A$ in terms of their restriction to the the closed subset $X \subset X_A$.

\begin{lemma} \label{lm surjection of automorphisms groups}
For any $A \in \Art$ and any orthogonal ({\em resp.} symplectic) sheaf $(\Ee_A,\Phi_A)$ over $X_A$, we have that $(\Ee_A,\Phi_A)$ is simple if and only if $(E,\phi) := (\Ee_A|_X,\Phi_A|_X)$ is simple.
\end{lemma}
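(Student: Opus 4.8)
The plan is to exploit the fact that simplicity is a condition on the automorphism group, and that $A$ is built out of $k$ by iterated small extensions, so one can argue by induction on the length of $A$ using the deformation-theoretic description of automorphisms. First I would observe that the forward direction (restriction of simple is simple) is elementary: any automorphism of $(E,\phi)$ other than $\pm\id_E$ would lift obstruction-freely under the retraction $X\hookrightarrow X_A$ — more precisely, the constant family $(\Ee_A,\Phi_A)$ is pulled back from $X$ only when $A=k$, so instead I argue contrapositively: if $(E,\phi)$ has an automorphism $\lambda\neq\pm\id_E$, then since $X\subset X_A$ is a closed subscheme and $\Ee_A$ is flat over $\Spec(A)$, restriction $\Aut(\Ee_A,\Phi_A)\to\Aut(E,\phi)$ is a group homomorphism; the point is that it is \emph{injective} once we know its kernel consists of automorphisms restricting to the identity, and an automorphism restricting to the identity is $\id+\epsilon$-type unipotent, hence has infinite order unless trivial, contradicting... — no. Let me instead structure the real argument around surjectivity.

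The substantive content is the converse: if $(E,\phi)$ is simple, then $(\Ee_A,\Phi_A)$ is simple. I would prove this by induction on $\dim_k A$, the base case $A=k$ being trivial. For the inductive step, choose a small extension $0\to H\to A\to A'\to 0$ with $\mM_A H=0$ and $H\cong k$. Set $(\Ee_{A'},\Phi_{A'})=(\Ee_A,\Phi_A)|_{X_{A'}}$, which is simple by induction. Given $g\in\Aut(\Ee_A,\Phi_A)$, its restriction $g'\in\Aut(\Ee_{A'},\Phi_{A'})$ equals $\pm\id$; replacing $g$ by $\mp g$ we may assume $g'=\id_{\Ee_{A'}}$. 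Then $g-\id_{\Ee_A}$ factors through $H\otimes_k-$, i.e. $g=\id+\xi$ with $\xi\in\Hom_X(E,E)$ (using $\Ee_A\otimes_A H\cong E$ since $\mM_A H=0$), and the condition that $g$ preserves $\Phi_A$ gives, modulo $\mM_A$, exactly $\phi\circ(\id_E\otimes\xi)+\phi\circ(\xi\otimes\id_E)=0$. By Proposition \ref{pr description of HH^0} this says $\xi\in\HH^0(\Delta^{\pm,\bullet}_{(E,\phi)})$; but $\HH^0(\Delta^{\pm,\bullet}_{(E,\phi)})$ also describes $\Aut(E,\phi)$ infinitesimally, and simplicity of $(E,\phi)$ forces $\Hom(E,E)^{\phi\text{-skew}}=0$ (an infinitesimal automorphism of a simple object preserving the form is zero), hence $\xi=0$ and $g=\id_{\Ee_A}$. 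Therefore $\Aut(\Ee_A,\Phi_A)=\{\pm\id\}$.

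For the converse direction of the ``if and only if'' — that simplicity of $(\Ee_A,\Phi_A)$ implies simplicity of $(E,\phi)$ — I would show that any automorphism $\lambda$ of $(E,\phi)$ extends (not necessarily uniquely) to an automorphism of $(\Ee_A,\Phi_A)$, or more cheaply: restriction $\Aut(\Ee_A,\Phi_A)\to\Aut(E,\phi)$ is surjective. This surjectivity is the analogue, for automorphisms, of Schlessinger's lifting arguments; concretely one lifts $\lambda$ step by step along small extensions, the obstruction to lifting living in $\HH^1$ of the deformation complex of the \emph{pair} $(E,\lambda)$, but since $\lambda$ is an automorphism the relevant obstruction vanishes because the constant family deforms trivially along with its automorphism. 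Then $\{\pm\id\}=\Aut(\Ee_A,\Phi_A)\twoheadrightarrow\Aut(E,\phi)$ forces $\Aut(E,\phi)=\{\pm\id\}$.

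The main obstacle I anticipate is the surjectivity of the restriction map on automorphism groups (the last paragraph): one must either invoke that a $G$-sheaf together with an automorphism forms a deformation problem whose obstruction space is controlled, or give a hands-on lifting argument using the local freeness of $\Ee_A$ over the structure sheaf and the vanishing $\mM_A H=0$ to solve the relevant linear equations over $X$. I expect the cleanest route is the inductive small-extension argument sketched above for the hard direction (simple downstairs $\Rightarrow$ simple upstairs), combined with a short direct verification that automorphisms lift, the latter being where Proposition \ref{pr description HH^1} — which already encodes compatibility of the quadratic form with extensions — does the real work.
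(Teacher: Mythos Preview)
Your approach is genuinely different from the paper's. For the direction ``$(E,\phi)$ simple $\Rightarrow (\Ee_A,\Phi_A)$ simple'' the paper does not induct on the length of $A$; it applies Nakayama's lemma directly to the finitely generated $A$-module $H^0(X_A,\End(\Ee_A))$, using $H^0(X_A,\End(\Ee_A))\otimes_A k\cong H^0(X,\End(E))$ to argue that the only endomorphism of $\Ee_A$ restricting to $\id_E$ is $\id_{\Ee_A}$ itself, whence $\Aut(\Ee_A,\Phi_A)\subset\{\pm\id\}$. Your inductive small-extension argument is more hands-on and makes explicit use of Proposition~\ref{pr description of HH^0}, which the paper never invokes here; this buys you a transparent identification of the possible new automorphisms at each stage with $\HH^0(\Delta^{\pm,\bullet}_{(E,\phi)})$, at the cost of an extra step.

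That extra step is where you should be more careful: you assert that simplicity of $(E,\phi)$ forces $\HH^0(\Delta^{\pm,\bullet}_{(E,\phi)})=0$. The paper defines simple as $\Aut(E,\phi)=\{\pm\id_E\}$, a statement about the abstract group of $k$-points, whereas $\HH^0$ is the tangent space to the automorphism group \emph{scheme} at the identity. Over an algebraically closed field of characteristic zero the implication holds because affine group schemes are smooth, but you should say so explicitly; the Nakayama route in the paper sidesteps this issue entirely.

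For the converse (``$(\Ee_A,\Phi_A)$ simple $\Rightarrow (E,\phi)$ simple''), your plan via surjectivity of $\Aut(\Ee_A,\Phi_A)\to\Aut(E,\phi)$ has a real gap: automorphisms of the central fibre need not extend over an arbitrary deformation, and your appeal to ``the constant family deforms trivially along with its automorphism'' does not apply since $(\Ee_A,\Phi_A)$ is not assumed constant. The paper's proof, as written, establishes only the direction discussed in the first paragraph, which is also the only one used in verifying \textbf{S4} in the deformation theorem.
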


\begin{proof}
The $A$-module $H^0(X_A, \End(\Ee_A))$ is finitely generated. Note that $H^0(X_A, \End(\Ee_A)) \otimes_A k = H^0(X,\End(E))$ and $\id_{\Ee_A} \otimes_A 1 = \id_E$. 

By Nakayama's lemma, if $\{ \id_{\Ee_A}, b_2, \dots, b_n\} \subset H^0(X_A, \End(\Ee_A))$ are such that $\{ \id_E, (b_2 \otimes_A 1), \dots, (b_n  \otimes_A 1)\}$ is a basis of $H^0(X,\End(E))$, then $\{ \id_{\Ee_A}, b_2, \dots, b_n\}$ generate $H^0(X_A, \End(\Ee_A))$. Then, there is no $b' \in H^0(X_A, \End(\Ee_A))$ such that $b' \neq \id_{\Ee_A}$ with restriction $b' \otimes_A 1 = \id_E$. Otherwise, as $b' = a_1 \otimes_A \id_{\Ee_A} + a_2 \otimes_A b_2 + \dots + a_n \otimes_A b_n$, we would obtain a contradiction with the linear independence of $\{ \id_E, (b_2 \otimes_A 1), \dots, (b_n  \otimes_A 1)\}$.

Then, the only element in $H^0(X_A, \End(\Ee_A))$ that restrict to $\id_E$ is $\id_{\Ee_A}$ itself. If $\Aut(E,\phi) = \{ \id_E, -\id_E \}$, it then follows that $\Aut(\Ee_A,\Phi_A)$ is $\{\id_{\Ee_A}, -\id_{\Ee_A}\}$.  
\end{proof}

\section{Obstruction theory for orthogonal and symplectic sheaves}
\label{sc obstruction}

In this section we will see that the second cohomology space of the deformation complex defined in Section \ref{sc def complex} provides an obstruction theory for the deformation functors of orthogonal and symplectic sheaves. We begin by the construction of the morphism \eqref{eq obs morphism} in this case.

Let $(E,\phi)$ be an orthogonal (resp. symplectic) sheaf over $X$ projective and let $0 \to H \to B \stackrel{\tau}{\to} A \to 0$ be a small extension of Artin algebras with residue field $k$. We want to construct a morphism from $\Def^{\, \pm}_{(E,\phi)}(A)$ to $H \otimes_k \HH^2 \left ( \Delta^{\pm,\bullet}_{(E,\phi)}\right )$. Then, after Proposition \ref{pr description HH^2}, given $(\Ee_A, \Phi_A, \gamma_A) \in \Def^{\, \pm}_{(E,\phi)}(A)$, we should construct a $2$-extension of the form \eqref{eq extension HH^2} equipped with a class of morphisms $[\mu] \in \Hom_X(G \otimes  F, \Oo_X)/\left (\Hom_X(G \otimes G, \Oo_X)^\pm \circ (\id_{G} \otimes f) \right )$ satisfying \eqref{eq mu and phi} and \eqref{eq mu and nu}.

From the small extension $0 \to H \to B \stackrel{\tau}{\to} A \to 0$, one naturally obtains the short exact sequence of $\Oo_{X_B}$-modules
\begin{equation} \label{eq ses structural sheaves}
0 \longrightarrow H \otimes_k \Oo_X \stackrel{\sigma_0}{\longrightarrow} \Oo_{X_B} \stackrel{\rho_0}{\longrightarrow} p_* \Oo_{X_A} \longrightarrow 0,
\end{equation}
where we denote by $p : X_A \hookrightarrow X_B$ the morphism associated to $B \twoheadrightarrow A$.  

Let us consider a locally free resolution $\Ww_A^\bullet \stackrel{\pi_A}{\longrightarrow} \Ee_A \to 0$ such that, for $i>0$,  
\[
\H^i(X_A, \Ww_{A,0}) = 0.
\]
Along with the above locally free resolution, consider a locally free sheaf $\Ww_{B,0}$ satisfying $\Ww_{B,0}|_{X_A} \cong \Ww_{A,0}$ and
\begin{equation} \label{eq cohomology vanishing for W_B} 
\H^i(X_B, \Ww_{B,0}) = 0,
\end{equation}
for $i > 0$. It can be easily verified that such a choice exists. Set $W_i := \Ww_{A,i}|_X$, with differentials $\partial_i := \partial_{A,i}|_X$ and $\pi := \gamma_A \circ \pi_A|_X : W_0 \to E$. Note that $W^\bullet \stackrel{\pi}{\longrightarrow} E \to 0$ is a locally free resolution.


Inspired by the classical approach to obstruction theory of coherent sheaves, we consider the short exact sequences of $\Oo_{X_B}$-modules
\begin{equation} \label{eq diagram to be filled}
\xymatrix{
 & 0 \ar[d] &  & 0 \ar[d] & 
\\
 &  H \otimes_k \partial_{-1}(W_{-1}) \ar[d] &  & p_*\partial_{A,-1}(\Ww_{A,-1}) \ar[d] & 
\\
0 \ar[r] &  H \otimes_k W_0 \ar[d] \ar[r]^\sigma & \Ww_{B,0} \ar[r]^\rho & p_* \Ww_{A,0} \ar[d]^{\pi_A} \ar[r] & 0
\\
 &  H \otimes_k E \ar[d] &  & p_* \Ee_{A} \ar[d] & 
\\
 & 0 & & 0, & 
}
\end{equation}
were $\rho$ is induced by the restriction $X_B \to X_A$ and $\sigma$ is the inclusion of the kernel of $\rho$. Denote
\begin{equation}\label{eq def of Ff}
\Ff := \quotient{\rho^{-1}\left ( p_*\partial_{A,-1}(\Ww_{A,-1}) \right )}{\sigma \left ( H \otimes_k \partial_{-1}(W_{-1}) \right )}.
\end{equation}
We then see that, out of \eqref{eq diagram to be filled}, one can naturally construct an extension 
\begin{equation} \label{eq ses obs th}
0 \longrightarrow H \otimes_k E \stackrel{i_0}{\longrightarrow} \Ff \stackrel{f_0}{\longrightarrow} p_* \partial_{A,-1}(\Ww_{A,-1}) \longrightarrow 0,
\end{equation}
which defines naturally $\zeta \in \Ext^1_{X_B}(p_*\partial_{A,-1}(\Ww_{A,-1}),  H \otimes_k E)$.

Having \eqref{eq ses structural sheaves} in mind, observe that any $\Oo_{X_B}$-module $\Mm$ such that $(H \otimes_k \Oo_X) \otimes \Mm = 0$ is naturally equipped with the inherited $\Oo_{X_B}/(H \otimes_k \Oo_X) \cong p_*\Oo_{X_A}$-module structure. Note that there exists an equivalence of categories sending the $p_*\Oo_{X_A}$-module $\Mm$ to the $\Oo_{X_A}$-module $\Mm^A$. Under this equivalence, $p_* \partial_{A,-1}(\Ww_{A,-1})$ gives naturally the $\Oo_{X_A}$-module $\partial_{A,-1}(\Ww_{A,-1})$. Since $H^2=0$, we see that $H \otimes_k E$ is annihilated by $H \otimes_k \Oo_X$ producing the $\Oo_{X_A}$-module $H \otimes_k \otimes p_{A,*} E$, where $p_{A} : X \hookrightarrow X_A$ is the inclusion of the closed reduced subscheme associated to the structural projection to the residue field, $A \to k$. Hence, after \eqref{eq ses obs th} and right-exactness of tensor product, one has that $(H \otimes_k \Oo_X) \otimes \Ff = 0$ so $\Ff$ gives rise to the $\Oo_{X_A}$-module $\Ff^A$. Therefore, from \eqref{eq ses obs th} we obtain the extension of $\Oo_{X_A}$-modules
\begin{equation} \label{eq ses obs th in X_A}
0 \longrightarrow H \otimes_k p_{A,*} E \stackrel{i_0^A}{\longrightarrow} \Ff^A \stackrel{f_0^A}{\longrightarrow} \partial_{A,-1}(\Ww_{A,-1}) \longrightarrow 0,
\end{equation}
associated to $\zeta^A \in \Ext^1_{X_A}(\partial_{A,-1}(\Ww_{A,-1}), H \otimes_k p_{A,*} E)$.

Composing \eqref{eq ses obs th in X_A} with the projection $\Ww_{A,0} \stackrel{\pi_A}{\to} \Ee_A$, one gets an element $\pi_A^*\zeta^A$ of $\Ext^2_{X_A}(\Ee_A, H \otimes_k p_{A,*} E)$ associated to the $2$-extension
\begin{equation} \label{eq 2-ext obs th in X_A}
0 \longrightarrow  H \otimes_k p_{A,*} E \stackrel{i_0^A}{\longrightarrow} \Ff^A \stackrel{f_0^A}{\longrightarrow} \Ww_{A,0} \stackrel{\pi_A}{\longrightarrow} \Ee_A \longrightarrow 0.
\end{equation}

One can prove that $\Ext^2_{X_A}(\Ee_A, H \otimes_k p_{A,*} E) \cong \Ext^2_X(E,H \otimes_k E)$, so \eqref{eq 2-ext obs th in X_A} is completely determined by its restriction to $X$,
\begin{equation} \label{eq getting 2-extension}
0 \longrightarrow  H \otimes_k E \stackrel{i}{\longrightarrow} F:= \Ff^A |_X \stackrel{f}{\longrightarrow} W_0 \stackrel{\pi}{\longrightarrow} E \longrightarrow 0.
\end{equation}
Note that $i$ and $f$ are given respectively by $i_0^A|_X$, $f_0^A|_X$ and we recall that $\pi = \gamma_A \circ \pi_A|_X$. Setting $G := W_0$ and $j := \pi$, we see that \eqref{eq getting 2-extension} gives a $2$-extension of the form \eqref{eq extension HH^2}.

Take now $\Phi_A :  \Ee_A \otimes \Ee_A \to \Oo_{X_A}$, and consider $(\pi_A \otimes \pi_A)^* \Phi_A \in \Hom_{X_A}(\Ww_{A,0} \otimes \Ww_{A,0}, \Oo_{X_A})^{\pm}$. Recalling \eqref{eq cohomology vanishing for W_B}, one naturally has that $\Ext_{X_B}^1(\Ww_{B,0} \otimes \Ww_{B,0}, \Oo_{X_B}) = 0$, hence the functor $\Hom_{X_B}(\Ww_{B,0} \otimes \Ww_{B,0}, \bullet)^{\pm}$ applied to the short exact sequence \eqref{eq ses structural sheaves} returns the following short exact sequence in cohomology,
\begin{align} \label{eq existence of Upsilon}
0 \longrightarrow \Hom_{X_B}(\Ww_{B,0} \otimes \Ww_{B,0},  H \otimes_k \Oo_X)^{\pm} \longrightarrow  \Hom&_{X_B}  (\Ww_{B,0} \otimes \Ww_{B,0}, \Oo_{X_B})^{\pm}  
\\
\longrightarrow & \Hom_{X_A}(\Ww_{A,0} \otimes \Ww_{A,0}, \Oo_{X_A})^{\pm} \longrightarrow 0.\nonumber
\end{align}
It follows that $(\pi_A \otimes \pi_A)^* \Phi_A$ determines a class $\left [ \Upsilon \right ]$ in
\begin{equation} \label{eq class of Upsilon}
\quotient{\Hom_{X_B}(\Ww_{B,0} \otimes \Ww_{B,0}, \Oo_{X_B})^{\pm}}{\sigma_0 \circ \Hom_{X_B}(\Ww_{B,0} \otimes \Ww_{B,0},  H \otimes_k \Oo_X)^{\pm},}
\end{equation}
where
\begin{align}  \label{eq up to the additive action of} 
\Hom_{X_B}(\Ww_{B,0} \otimes \Ww_{B,0},  H \otimes_k \Oo_X)^{\pm} & \cong \Hom_{X_B}(\Ww_{B,0}|_X \otimes \Ww_{B,0}|_X,  H \otimes_k \Oo_X)^{\pm}
\\
& \cong H \otimes_k \Hom_{X}(W_0 \otimes W_0,  \Oo_X)^{\pm}. \nonumber
\end{align}
Pick any representant $\Upsilon \in \Hom_{X_B}(\Ww_{B,0} \otimes \Ww_{B,0}, \Oo_{X_B})^{\pm}$ of the class $\left [ \Upsilon \right ]$ in \eqref{eq class of Upsilon} fixed by $(\pi_A \otimes \pi_A)^* \Phi_A$. We obviously have that $\Upsilon|_{X_A} = (\pi_A \otimes \pi_A)^* \Phi_A$. Therefore, the restriction to $X_A$ of the image under $\Upsilon$ of the subspace $\Ww_{B,0} \otimes \rho^{-1}(p_*\partial_{A, -1}(\Ww_{A, -1})) \subset \Ww_B \otimes \Ww_B$ vanishes,  
\[
\left . \Upsilon \left ( \Ww_{B,0} \otimes \rho^{-1}(p_*\partial_{A, -1}(\Ww_{A, -1})) \right ) \right |_{X_A} = (\pi_A \otimes \pi_A)^* \Phi_A \left ( \Ww_A \otimes \partial_{A, -1}(\Ww_{A, -1}) \right ) = 0.  
\]
It then follows that,
\begin{equation} \label{eq Upsilon of our subspace contained in I otimes Oo_X}
\Upsilon \left ( \Ww_{B,0} \otimes \rho^{-1}(p_*\partial_{A, -1}(\Ww_{A, -1})) \right ) \subset \sigma \left ( H \otimes_k \Oo_X \right ).
\end{equation}

Since $H^2 = 0$ one has that the intersection of $(H \otimes_k W_0) \otimes \Ww_{B,0}$ and $\Ww_{B,0} \otimes (H \otimes_k W_0)$ is $0 \cong (H \otimes_k W_0) \otimes (H \otimes_k W_0)$. Thanks to this, and recalling that $H \cdot \mM_A = 0$, one can then consider the subspaces of $\Ww_{B,0} \otimes \Ww_{B,0}$ 
\[
V_1 := (H \otimes_k W_0) \otimes \Ww_{B,0}  \cong (H \otimes_k W_0) \otimes \Ww_{A,0} \cong H \otimes_k (W_0 \otimes W_0), 
\]
and
\[
V_2 := \Ww_{B,0} \otimes (H \otimes_k W_0)  \cong \Ww_{A,0} \otimes (H \otimes_k W_0) \cong H \otimes_k (W_0 \otimes W_0).
\]
By construction $\Upsilon|_X = (\pi \otimes \pi)^*\phi$. Then, by continuity, we have that 
\begin{equation} \label{eq nu over V_i} 
\Upsilon|_{V_i} \cong \id_H \otimes (\pi \otimes \pi)^*\phi.
\end{equation}
It is a consequence of \eqref{eq nu over V_i} that
\begin{equation} \label{eq restriction of nu 1}
\Upsilon \left ( \Ww_{B,0} \otimes (H \otimes_k \partial_{-1}(W_{-1})) \right ) = 0
\end{equation}
since $\Ww_B \otimes (H \otimes_k \partial_{-1}(W_{-1})) \cong W_0 \otimes (H \otimes_k \partial_{-1}(W_{-1}))$. Also, as $(H \otimes_k W_0) \otimes \rho^{-1}(p_*\partial_{A, -1}(\Ww_{A, -1})) \cong (H \otimes_k W_0) \otimes \partial_{A, -1}(\Ww_{A, -1}) \cong (H \otimes_k W_0) \otimes \partial_{-1}(W_{-1})$, we have
\begin{equation} \label{eq restriction of nu 2}
\Upsilon \left ( (H \otimes_k W_0) \otimes \rho^{-1}(p_*\partial_{A, -1}(\Ww_{A, -1})) \right ) = 0. 
\end{equation}
It follows from \eqref{eq Upsilon of our subspace contained in I otimes Oo_X}, \eqref{eq restriction of nu 1} and \eqref{eq restriction of nu 2} that $\Upsilon$ applied to $\Ww_{B,0} \otimes \rho^{-1}(p_*\partial_{A, -1}(\Ww_{A, -1}))$ descends to the morphism of $\Oo_{X_B}$-modules
\[
\Upsilon_{\Ff} : \Ww_{A, 0} \otimes \Ff \to H \otimes_k \Oo_X.
\]
Consider $\Upsilon_{\Ff}^A: \Ww_{A, 0} \otimes \Ff^A \to H \otimes_k \Oo_X$ to be the morphism of $\Oo_{X_A}$-modules associated to $\Upsilon_{\Ff}$ under the equivalence mentioned above. Since we defined $F_1 = \Ff^A|_X$, let us set accordingly
\begin{equation} \label{eq definition of mu}
\mu:= \Upsilon_{\Ff}^A|_X : W_0 \otimes F_1 \to H \otimes_k \Oo_X.
\end{equation}
Recall that $\Upsilon$ is defined up to the additive action of \eqref{eq up to the additive action of}. We note that this action corresponds to the additive action of $\Hom_X(W_0 \otimes W_0, H \otimes_k \Oo_X)^{\pm} \circ (\id_{W_0} \otimes f)$ over $\mu$. It follows from \eqref{eq nu over V_i} that $\mu$ satisfies \eqref{eq mu and phi}. Also, as $\Upsilon = \pm \Upsilon \circ \theta_{\Ww_{B,0}}$ by construction, one naturally has that $\mu$ satisfies \eqref{eq mu and nu} as well. Note that for any other locally free resolution satisfying \eqref{eq cohomology vanishing for W_B}, we would obtain a $2$-extension in the same equivalence class as \eqref{eq getting 2-extension} and the corresponding class of morphism as \eqref{eq definition of mu}.

Thus, we have completed the construction of a morphism associated to the deformation functor $\Def_{(E,\phi)}^{\, +}$ and an small extension of Artin algebras $0 \to H \to B \stackrel{\tau}{\to} A \to 0$,  
\begin{equation} \label{eq obs morphism for Def^+} 
\Omega^{+}_{\tau}: \Def^{\, +}_{(E,\phi)}(A) \longrightarrow  H \otimes_k \HH^2 \left ( \Delta^{\, +, \bullet}_{(E,\phi)} \right ),  
\end{equation}
sending $(\Ee_A, \Phi_A, \gamma_A) \in \Def^{\, \pm}_{(E,\phi)}(A)$ to the point of $H \otimes_k \HH^2  \left ( \Delta^{\, +, \bullet}_{(E,\phi)} \right )$ given by the $2$-extension \eqref{eq getting 2-extension} and the class of morphisms given by \eqref{eq definition of mu}. Similarly, associated to $\Def^{\, -}_{(E,\phi)}$, we construct
\begin{equation} \label{eq obs morphism for Def^-} 
\Omega^{-}_{\tau}: \Def^{\, -}_{(E,\phi)}(A) \longrightarrow  H \otimes_k \HH^2 \left ( \Delta^{\, -, \bullet}_{(E,\phi)} \right ).
\end{equation}

We now see that these maps provide an obstruction theory for orthogonal and symplectic sheaves. Some results of obstruction theory of sheaves are needed and, for the reader's convenience, we include them instead of just cite them. We start by checking condition \textbf{O1}.

\begin{proposition} \label{pr O1}
Consider the small extension of Artin algebras $0 \to H \to B \stackrel{\tau}{\to} A \to 0$. Given $(\Ee_A, \Phi_A, \gamma_A) \in \Def^\pm_{(E,\phi)}(A)$, one has $\Omega^\pm_\tau(\Ee_A, \Phi_A, \gamma_A) = 0$ if and only if there exists $(\Ee_B, \Phi_B, \gamma_B) \in \Def^{\, \pm}_{(E,\phi)}(B)$ such that $(\Ee_B, \Phi_B, \gamma_B)|_{X_A} \cong (\Ee_A, \Phi_A, \gamma_A)$. 
\end{proposition}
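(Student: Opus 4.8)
The plan is to prove the two implications of condition \textbf{O1} separately, using the explicit description of the obstruction class $\Omega^\pm_\tau(\Ee_A, \Phi_A, \gamma_A)$ built in the construction above together with Proposition~\ref{pr description HH^2}. Recall that $\Omega^\pm_\tau(\Ee_A,\Phi_A,\gamma_A)$ is the point of $H \otimes_k \HH^2(\Delta^{\pm,\bullet}_{(E,\phi)})$ given by the $2$-extension \eqref{eq getting 2-extension} obtained from the diagram \eqref{eq diagram to be filled}, equipped with the class of morphisms $[\mu]$ of \eqref{eq definition of mu}.

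\smallskip

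First I would handle the easier direction: if a lift $(\Ee_B,\Phi_B,\gamma_B) \in \Def^{\,\pm}_{(E,\phi)}(B)$ exists, then $\Omega^\pm_\tau = 0$. The key observation is that, in the presence of a lift, one may choose a locally free resolution $\Ww_{B}^\bullet \stackrel{\pi_B}{\longrightarrow} \Ee_B \to 0$ over $X_B$ extending the chosen resolution $\Ww_A^\bullet$ of $\Ee_A$, with $\Ww_{B,0}$ satisfying \eqref{eq cohomology vanishing for W_B}; this is possible precisely because $\Ee_B$ exists as an $\Oo_{X_B}$-module. Then the middle row of \eqref{eq diagram to be filled} can be completed by the genuine differential $\partial_{B,-1} : \Ww_{B,-1} \to \Ww_{B,0}$, so the sheaf $\Ff$ of \eqref{eq def of Ff} becomes $\partial_{B,-1}(\Ww_{B,-1})$ and the $2$-extension \eqref{eq ses obs th in X_A} splits: $\Ff^A \cong (H\otimes_k p_{A,*}E) \oplus \partial_{A,-1}(\Ww_{A,-1})$, because the extension $\zeta^A$ is pulled back from a trivial one over $X_B$. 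Thus the restricted $2$-extension \eqref{eq getting 2-extension} is in the split form \eqref{eq split ses} of Proposition~\ref{pr description HH^2}. It then remains to verify the condition \eqref{eq condition for mu to lift}, namely $[\mu]\circ(\id_G \otimes q) = 0$ in $\Ext^1_X(E\otimes E, H\otimes_k\Oo_X)^{\pm}$; for this I would take $\Upsilon := (\pi_B\otimes\pi_B)^*\Phi_B$ as the representative of the class $[\Upsilon]$ in \eqref{eq class of Upsilon}, which is legitimate since it restricts to $(\pi_A\otimes\pi_A)^*\Phi_A$ on $X_A$. Because this $\Upsilon$ descends to a form on the \emph{whole} of $\Ww_{B,0}/\partial_{B,-1}(\Ww_{B,-1}) \cong \Ee_B$ (not merely to $\Ww_{B,0}\otimes\Ff$), the induced $\mu$ vanishes on $G\otimes\ker(f)$, i.e.\ on $G$ tensored with the image of $q$, giving exactly \eqref{eq condition for mu to lift}.

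\smallskip

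For the converse I would reverse the argument. Assume $\Omega^\pm_\tau(\Ee_A,\Phi_A,\gamma_A) = 0$ in $H\otimes_k\HH^2(\Delta^{\pm,\bullet}_{(E,\phi)})$. By Proposition~\ref{pr description HH^2} this means we may choose representatives so that the $2$-extension \eqref{eq getting 2-extension} splits as in \eqref{eq split ses} and $[\mu]$ satisfies \eqref{eq condition for mu to lift}. On the level of the underlying coherent sheaf, vanishing of the obstruction class $\pi_A^*\zeta^A \in \Ext^2_{X_A}(\Ee_A, H\otimes_k p_{A,*}E)$ is, by classical Grothendieck obstruction theory for coherent sheaves (which we may assume from Section~\ref{sc coherent sheaves}), equivalent to the existence of an $\Oo_{X_B}$-flat coherent lift $\Ee_B$ of $\Ee_A$; concretely, splitting \eqref{eq ses obs th} over $X_B$ lets one build a genuine differential $\partial_{B,-1}$ and hence $\Ee_B := \coker(\partial_{B,-1})$ together with $\gamma_B$. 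The remaining and principal task is to produce the quadratic (resp.\ symplectic) form $\Phi_B$ on $\Ee_B$ lifting $\Phi_A$ and with the correct symmetry. The idea is that the class $[\Upsilon]$ in \eqref{eq class of Upsilon} lives in a torsor over $H\otimes_k\Hom_X(W_0\otimes W_0,\Oo_X)^{\pm}$, and \emph{choosing a splitting} of \eqref{eq ses obs th} amounts to modifying the representative $\Upsilon$ by an element of that group; condition \eqref{eq condition for mu to lift} says precisely that one can choose the modification so that the resulting $\Upsilon$ descends all the way down to a form on $\Ww_{B,0}/\partial_{B,-1}(\Ww_{B,-1}) \cong \Ee_B$, which is the sought $\Phi_B$. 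One then checks: the symmetry $\Phi_B = \pm\Phi_B\circ\theta_{\Ee_B}$ follows from $\Upsilon = \pm\Upsilon\circ\theta_{\Ww_{B,0}}$, flatness of $\Ee_B$ over $B$ is the coherent statement already invoked, and non-degeneracy of $\Phi_B|_{U_{\Ee_B}}$ follows because over the open locus where $\Ee_B$ is locally free the form reduces modulo $\mM_B$ to the non-degenerate $\phi|_{U_E}$, and a form over a local Artin ring whose reduction is non-degenerate is itself non-degenerate. Finally $\gamma_B : (\Ee_B,\Phi_B)|_X \to (E,\phi)$ is induced by $\gamma_A$ and the identification $\Phi_B|_X = (\pi\otimes\pi)^*\phi$ inherited from $\Upsilon|_X = (\pi\otimes\pi)^*\phi$ of \eqref{eq nu over V_i}.

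\smallskip

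The main obstacle I expect is the bookkeeping in the converse direction: translating the rather formal vanishing statement ``$[\mu]\circ(\id_G\otimes q)=0$ in $\Ext^1$'' into a \emph{concrete} choice of a representative $\Upsilon$ over $X_B$ that simultaneously (i)~restricts to $(\pi_A\otimes\pi_A)^*\Phi_A$ on $X_A$, (ii)~has the $\pm$ symmetry, and (iii)~kills $\Ww_{B,0}\otimes\partial_{B,-1}(\Ww_{B,-1})$ so that it descends to $\Ee_B\otimes\Ee_B$. The compatibility of the splitting of the coherent extension \eqref{eq ses obs th} with the splitting of the quadratic data has to be tracked carefully through the equivalence of categories between $p_*\Oo_{X_A}$-modules and $\Oo_{X_A}$-modules used throughout, and one must confirm that the additive ambiguity in the choice of $\Upsilon$ (an element of \eqref{eq up to the additive action of}) is exactly the freedom parametrized by $\Ext^1_X(E\otimes E, H\otimes_k\Oo_X)^{\pm}$ appearing in \eqref{eq condition for mu to lift}, so that the two vanishing conditions — on the $2$-extension class and on $[\mu]$ — together are not merely necessary but sufficient. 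All other steps (symmetry, flatness, non-degeneracy over $U_{\Ee_B}$, functoriality of $\gamma_B$) are routine once the representative is in hand.
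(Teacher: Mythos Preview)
Your proposal is correct and follows essentially the same route as the paper: reduce the existence of the underlying sheaf lift $\Ee_B$ to the splitting of \eqref{eq ses obs th} via classical obstruction theory of coherent sheaves, and then show that some representative $\Upsilon$ of the class \eqref{eq class of Upsilon} descends to $\Ee_B\otimes\Ee_B$ (giving $\Phi_B$) precisely when $[\mu]$ satisfies \eqref{eq condition for mu to lift}. One small correction to your converse sketch: the choice of a splitting of \eqref{eq ses obs th} (which fixes $\Ee_B=\Ww_{B,0}/\Vv_B$) and the choice of representative $\Upsilon$ within its class are two \emph{independent} freedoms rather than the same one---the paper keeps them separate, first building $\Ee_B$ from a splitting $s$ and then checking that $\Upsilon(\Ww_{B,0}\otimes\Vv_B)=0$ for some $\Upsilon$ exactly when \eqref{eq condition for mu to lift} holds.
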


\begin{proof}
From obstruction theory of sheaves, $(\Ee_A, \gamma_A)$ lifts to $(\Ee_B, \gamma_B)$ if and only if one can give an exact filler of \eqref{eq diagram to be filled}. It is a standard result of abelian categories (see \cite[Lemma 3.10]{nitsure} for instance) that exact fillers of \eqref{eq diagram to be filled} exist if and only if the short exact sequence \eqref{eq ses obs th} splits. In that case there exists a splitting morphism,
\[
s: p_*\partial_{A,-1}(\Ww_{A,-1}) \longrightarrow \Ff,
\]
whose composition with $\rho$ is the identity. Hence
\begin{equation} \label{eq description of the image of s}
s \left ( p_*\partial_{A,-1}(\Ww_{A,-1}) \right ) \cong \ker \pi_A.
\end{equation}
Fixing an splitting morphism, we can define $\Vv_B \subset \Ww_{B,0}$ as the preimage of $s \left ( p_*\partial_{A,-1}(\Ww_{A,-1}) \right )$ under the projection 
\[
\rho^{-1}\left ( p_*\partial_{A,-1}(\Ww_{A,-1}) \right ) \longrightarrow \Ff = \quotient{\rho^{-1}\left (p_* \partial_{A,-1}(\Ww_{A,-1}) \right )}{H \otimes_k \partial_{-1}(W_{-1})}.
\]
Then, set
\[
\Ee_{B} : = \quotient{\Ww_{B,0}}{\Vv_B}.
\]
One can easily check that this construction provides a coherent sheaf $\Ee_B$ over $X_B$ satisfying $\Ee_B|_{X_A} \cong \Ee_A$ and, furthermore, $\Ee_B$ is flat over $B$ (see for instance \cite[Lemma 3.14]{nitsure}). One trivially has that $\Ee_B|_X = \Ee_A|_X$, so we pick $\gamma_B$ to be $\gamma_A: \Ee_B|_X = \Ee_A|_X \stackrel{\cong}{\longrightarrow} E$.

Up to this point, we have just reproduced the classical theory of sheaves, seeing $(\Ee_A, \gamma_A)$ lifts to $(\Ee_B, \gamma_B)$ if and only if \eqref{eq ses obs th} splits. Pick $[\Upsilon]$ to be the class in \eqref{eq class of Upsilon} given by $(\pi_A \otimes \pi_A)^*\Phi_A$. If \eqref{eq ses obs th} splits, we have that a representant $\Upsilon$ of $[\Upsilon]$ defines $\Phi_B \in \Hom_{X_B}(\Ee_{B} \otimes \Ee_{B}, \Oo_{X_B})^{\pm}$ if and only if 
\begin{equation} \label{eq when Upsilon lifts}
\Upsilon \left ( \Ww_{B,0} \otimes \Vv_B \right ) = 0.
\end{equation}
It is a consequence of \eqref{eq Upsilon of our subspace contained in I otimes Oo_X}, \eqref{eq restriction of nu 1} and \eqref{eq restriction of nu 2} that \eqref{eq when Upsilon lifts} holds whenever 
\begin{equation} \label{eq when Upsilon' lifts}
\Upsilon_{\Ff} \left ( \Ww_{A,0} \otimes s \left ( p_* \partial_{A,-1}(\Ww_{A,-1})  \right ) \right ) = 0.
\end{equation}

It remains to show that \eqref{eq ses obs th} splits and \eqref{eq when Upsilon' lifts} holds for some representant of $\left [ \Upsilon \right ]$ in \eqref{eq class of Upsilon} if and only if the image of $\Omega^\pm_\tau(\Ee_A, \Phi_A, \gamma_A) = 0$. By the second statement of Proposition \ref{pr description HH^2}, the later is equivalent to the fact that the $2$-extension given in \eqref{eq getting 2-extension} has the form \eqref{eq split ses} and the class $[\mu]$ in $\Hom_X(W_0 \otimes F, H \otimes_k \Oo_X)^{\pm} / \Hom_X(W_0 \otimes W_0, H \otimes_k \Oo_X)^{\pm} \circ (\id_{W_0} \otimes f)$ defined in \eqref{eq definition of mu} satisfies \eqref{eq condition for mu to lift}. Note that \eqref{eq ses obs th} splits if and only if \eqref{eq ses obs th in X_A} splits, and further, \eqref{eq ses obs th in X_A} splits if and only if the $2$-extension \eqref{eq 2-ext obs th in X_A} splits. Since $\Ext^2_{X_A}(\Ee_A, H \otimes_k p_{A,*} E) \cong \Ext^2_X(E, H \otimes_k E)$, we have that \eqref{eq 2-ext obs th in X_A} splits if and only if \eqref{eq getting 2-extension} splits giving rise to a $2$-extension of the form \eqref{eq split ses}. In this case and recalling \eqref{eq description of the image of s}, the equation \eqref{eq when Upsilon' lifts} holds if and only if
\begin{equation} \label{eq when mu lifts}
\mu(W_0 \otimes \ker \pi) = 0.
\end{equation}
This is the case whenever the class $[\mu]$ satisfies \eqref{eq condition for mu to lift}, so $\Omega^\pm_\tau$ the proof is complete. 
\end{proof}

We check now that the morphisms $\Omega_\tau^\pm$ satisfy condition \textbf{O2}. Observe that any morphism of small extension decomposes into 
\begin{equation} \label{eq extension for O2 a}
\xymatrix{
0 \ar[r] & H \ar[r] \ar[d]_{\overline{h}} & B \ar[r]^{\tau} \ar[d]_{\overline{\beta}} & A \ar[r] \ar@{=}[d] & 0
\\
0 \ar[r] &\overline{H} :=  H/\ker \overline{h} \ar[r] & \overline{B} : = B/\ker \overline{h} \ar[r]^{\quad \quad \overline{\tau}} & A \ar[r] & 0,
}
\end{equation}
and
\begin{equation} \label{eq extension for O2 b}
\xymatrix{
0 \ar[r] & H \ar[r] \ar@{^(->}[d]_{h} & B \ar[r]^{\tau} \ar[d]_{\beta} & A \ar[r] \ar[d]^{\alpha} & 0
\\
0 \ar[r] & H' \ar[r] & B' \ar[r]^{\tau'} & A' \ar[r] & 0,
}
\end{equation}
where $h$ is an injection. Therefore, to check that \textbf{O2} holds, it is enough to check it for morphisms of small extensions of the form \eqref{eq extension for O2 a} and \eqref{eq extension for O2 b}. We start by the first type.

\begin{proposition} \label{pr O2 a}
Consider the morphism of small extensions \eqref{eq extension for O2 a}. Given $(\Ee_A, \Phi_A, \gamma_A) \in \Def^{\, \pm}_{(E,\phi)}(A)$, one has that 
\[
\Omega^\pm_{\overline{\tau}}(\Ee_A, \Phi_A, \gamma_A) = \left (\overline{h} \otimes \id_{\HH^2} \right) \circ \Omega^\pm_\tau(\Ee_A, \Phi_A, \gamma_A).
\]
\end{proposition}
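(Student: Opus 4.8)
The plan is to trace through the construction of $\Omega^\pm_\tau$ as given just before Proposition~\ref{pr O1}, and observe that every ingredient behaves naturally under the quotient $B \twoheadrightarrow \overline{B} = B/\ker\overline{h}$, so that the $2$-extension and the class $[\mu]$ produced for $\overline{\tau}$ are the images, under $\overline{h}$, of those produced for $\tau$. First I would note that since $A$ is unchanged in the morphism of small extensions \eqref{eq extension for O2 a}, the input datum $(\Ee_A,\Phi_A,\gamma_A)$ is literally the same on both sides, and we may choose the same locally free resolution $\Ww_A^\bullet \to \Ee_A$ for both computations. The only choice that differs is the locally free sheaf $\Ww_{B,0}$ (resp.\ $\Ww_{\overline{B},0}$) extending $\Ww_{A,0}$ across $X_B$ (resp.\ $X_{\overline{B}}$); but since $X_{\overline{B}} \hookrightarrow X_B$ is a closed immersion lifting $X_A \hookrightarrow X_B$, I would take $\Ww_{\overline{B},0} := \Ww_{B,0}|_{X_{\overline{B}}}$, checking that it still satisfies the required cohomology vanishing \eqref{eq cohomology vanishing for W_B} (this follows because restriction along $X_{\overline{B}} \hookrightarrow X_B$ corresponds to tensoring with the quotient $\Oo_{X_{\overline{B}}}$, and the relevant higher cohomology already vanishes on $X_B$).

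Next I would compare the diagrams \eqref{eq diagram to be filled} for $\tau$ and $\overline{\tau}$. The map $\overline{h}: H \to \overline{H}$ induces a map of short exact sequences \eqref{eq ses structural sheaves} from the $B$-version to the $\overline{B}$-version (i.e.\ a commutative ladder relating $0 \to H\otimes_k\Oo_X \to \Oo_{X_B} \to p_*\Oo_{X_A}\to 0$ and $0 \to \overline{H}\otimes_k\Oo_X \to \Oo_{X_{\overline{B}}} \to p_*\Oo_{X_A}\to 0$), and hence a map of the diagrams \eqref{eq diagram to be filled}. Applying $\overline{h}\otimes-$ to the extension \eqref{eq ses obs th} for $\tau$ and comparing with the one for $\overline{\tau}$, one sees that $\Ff_{\overline{B}} \cong \Ff_B \otimes_{\Oo_{X_B}} \Oo_{X_{\overline{B}}}$ and that $\zeta$ maps to $\overline\zeta$ under $\overline{h}$; passing to the $\Oo_{X_A}$-module incarnations and restricting to $X$ exactly as in the passage from \eqref{eq ses obs th in X_A} to \eqref{eq getting 2-extension}, this shows that the $2$-extension \eqref{eq getting 2-extension} for $\overline{\tau}$ is obtained from that for $\tau$ by pushout along $\overline{h}: H\otimes_k E \to \overline{H}\otimes_k E$. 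By the functoriality of Yoneda $\Ext$ (pushout on the kernel term corresponds to the map $\id_{\Obs}\otimes\overline{h}$ on $\HH^2$ under the identification of Proposition~\ref{pr description HH^2}), this is precisely the $\HH^2$-part of the claimed identity.

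It then remains to track the quadratic datum $[\mu]$. Here I would observe that the class $[\Upsilon]$ in \eqref{eq class of Upsilon} is defined by $(\pi_A\otimes\pi_A)^*\Phi_A$, which again is the same on both sides; the only change is the ambient group, and $\overline{h}$ induces a surjection of the exact sequences \eqref{eq existence of Upsilon}, hence a well-defined map on the quotient spaces \eqref{eq class of Upsilon}, sending $[\Upsilon]$ (for $B$) to $[\overline\Upsilon]$ (for $\overline{B}$). Choosing a representative $\Upsilon$ and letting $\overline\Upsilon := \Upsilon|_{X_{\overline{B}}}$ composed with $H\otimes_k\Oo_X \to \overline{H}\otimes_k\Oo_X$, the descent in \eqref{eq definition of mu} is compatible, so $\overline\mu = (\overline{h}\otimes\id)\circ\mu$ as classes modulo $\Hom_X(W_0\otimes W_0, -\otimes_k\Oo_X)^\pm\circ(\id_{W_0}\otimes f)$. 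Combining the $2$-extension statement and the $[\mu]$ statement gives $\Omega^\pm_{\overline\tau}(\Ee_A,\Phi_A,\gamma_A) = (\overline{h}\otimes\id_{\HH^2})\circ\Omega^\pm_\tau(\Ee_A,\Phi_A,\gamma_A)$, using once more the independence of these classes from the choice of resolution noted at the end of the construction.

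I expect the main obstacle to be bookkeeping rather than conceptual: one must be careful that the identifications $\Ext^2_{X_A}(\Ee_A, H\otimes_k p_{A,*}E)\cong\Ext^2_X(E,H\otimes_k E)$ and the passage between $\Oo_{X_B}$-modules annihilated by $H\otimes_k\Oo_X$ and $\Oo_{X_A}$-modules are all natural in $H$, so that applying $\overline{h}$ commutes with each of them; and one must verify that the chosen lifts $\Ww_{\overline{B},0}$ and $\Upsilon|_{X_{\overline B}}$ are legitimate choices on the $\overline{B}$-side, which reduces to the cohomology vanishing being preserved under the further quotient. Once these naturality checks are in place, the identity follows by assembling the compatible maps on the $2$-extension and on $[\mu]$.
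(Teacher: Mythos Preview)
Your proposal is correct and follows essentially the same approach as the paper: choose $\Ww_{\overline{B},0} = \Ww_{B,0}|_{X_{\overline{B}}}$, identify $\overline{\Ff}$ with $\Ff|_{X_{\overline{B}}}$ (equivalently $\Ff/(\ker\overline{h}\otimes_k E)$), and take $\overline{\Upsilon} = \Upsilon|_{X_{\overline{B}}}$, so that both the $2$-extension and the class $[\mu]$ for $\overline{\tau}$ are the images under $\overline{h}$ of those for $\tau$. One small wording slip: $\Upsilon$ has target $\Oo_{X_B}$, so $\overline{\Upsilon}$ is simply $\Upsilon|_{X_{\overline{B}}}$ with no further composition needed; the map $\overline{h}$ enters only after passing to $\Upsilon_{\Ff}$, whose target is $H\otimes_k\Oo_X$.
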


\begin{proof}
Since one obviously has that $\overline{h}$ sends $\ker \overline{h}$ to $0$, observe that $\overline{h} \otimes \id_{\HH^2}$ applied to the $2$-extension \eqref{eq getting 2-extension} gives
\begin{equation} \label{eq 2-ext a 1}
0 \to \overline{H} \otimes_k E \to (F/\ker \overline{h} \otimes_k E) \to W_0 \to E \to 0.
\end{equation}
Pick now $\mu : W_0 \otimes F \to H \otimes_k \Oo_X$ given in \eqref{eq definition of mu} and note that 
\[
\left (\overline{h} \otimes_k \id_{\Oo_X} \right ) \circ \mu (W_0 \otimes (\ker \overline{h} \otimes_k E)) = 0.
\]
Then, $\left (\overline{h} \otimes_k \id_{\Oo_X} \right ) \circ \mu $ descends to 
\begin{equation} \label{eq mu a 1}
\overline{\mu} : W_0 \otimes (F/ (\ker \overline{h} \otimes_k E)) \to \overline{H} \otimes_k E,
\end{equation}
which is the image of $\mu$ under $\overline{h} \otimes_k \id_{\HH^2}$. We have seen that $\left (\overline{h} \otimes \id_{\HH^2} \right ) \circ \Omega^\pm_\tau(\Ee_A, \Phi_A, \gamma_A)$ is determined by \eqref{eq 2-ext a 1} and \eqref{eq mu a 1}.  

We now study $\Omega^\pm_{\overline{\tau}}(\Ee_A, \Phi_A, \gamma_A)$. Since $X_A \stackrel{p}{\hookrightarrow} X_B$ is the composition $X_A \stackrel{\overline{p}}{\hookrightarrow} X_{\overline{B}} \stackrel{p_{\, \overline{\beta}}}{\hookrightarrow} X_B$ one can consider in this case $\Ww_{\overline{B}, 0} := \Ww_{B,0} |_{X_{\overline{B}}}$ and the morphism $\overline{\rho} : \Ww_{\overline{B}, 0} \to \overline{p}_*\Ww_{A,0}$ being the restriction $\rho|_{X_{\overline{B}}}$. Pick also $\overline{\sigma} : \overline{H} \otimes_k W_0 \to \Ww_{\overline{B},0}$ corresponding to $\sigma$ on $X_{\overline{B}}$. Recall $\Ff$ from \eqref{eq def of Ff} and construct $\overline{\Ff}$ accordingly. It follows from the previous discussion that 
\[
\Ff|_{X_{\overline{B}}} = \overline{\Ff}.
\]
Observing that $\ker \overline{h} \otimes_k E$ is the kernel of $\Ff \to \overline{\Ff} = \Ff|_{X_{\overline{B}}}$, it then follows that
\begin{equation} \label{eq description of ol Ff}
\overline{\Ff}^A \cong \quotient{\Ff^A}{\ker \overline{h} \otimes_k E}.
\end{equation}

Up to here, we have been dealing with obstruction theory of sheaves. We now address the quadratic form. Observe that \eqref{eq existence of Upsilon} also holds for $X_{\overline{B}}$. Therefore $(\pi_A \otimes \pi_A)^*\Phi_A$ defines a class in the space \eqref{eq class of Upsilon} adapted to $X_{\overline{B}}$ from which can pick a representant $\overline{\Upsilon} \in \Hom_{X_{\overline{B}}}(\Ww_{\overline{B},0} \otimes \Ww_{\overline{B},0}, \Oo_{\overline{B}})^\pm$ that satisfies
\begin{equation} \label{eq description of ol Upsilon}
\Upsilon|_{X_{\overline{B}}} = \overline{\Upsilon}.    
\end{equation}
It is a direct consequence of \eqref{eq description of ol Ff} and \eqref{eq description of ol Upsilon} that $\Omega^\pm_{\overline{\tau}}(\Ee_A, \Phi_A, \gamma_A)$ is determined by \eqref{eq 2-ext a 1} and \eqref{eq mu a 1}. This concludes the proof.
\end{proof}

Since $h$ on \eqref{eq extension for O2 b} is injective, $H$ defines naturally a subspace of $H'$ so one can give (non-canonically) a decomposition of the vector spaces 
\begin{equation} \label{eq decomposition of H'}
H' = H \oplus H''
\end{equation}
and we can assume 
\begin{equation} \label{eq decomposition of h}
h = \id_{H} \oplus 0.
\end{equation}
Out of \eqref{eq extension for O2 b} and the (non-canonical) decomposition \eqref{eq decomposition of H'}, one can always construct a small extension $\overline{\tau}'$ and a morphism of small extensions making the following diagram commutative,
\begin{equation} \label{eq extension for O2 c}
\xymatrix{
0 \ar[r] & H \ar[r] \ar[d]_{h} & B \ar[r]^{\tau} \ar[d]_{\beta} & A \ar[r] \ar[d]^{\alpha} & 0
\\
0 \ar[r] & H \oplus H'' \ar[r] \ar[d]_{\overline{h}'} & B' \ar[r]^{\tau'} \ar[d]_{\overline{\beta}'} & A' \ar[r] \ar@{=}[d] & 0
\\
0 \ar[r] & H \ar[r] & \overline{B}':= B'/H'' \ar[r]^{\quad \quad \overline{\tau}'} & A' \ar[r] & 0,
}
\end{equation}
where $\overline{h}'$ and $\overline{\beta}'$ are the obvious projections. If we set $\beta':= \overline{\beta}' \circ \beta$ and note that $\overline{h}' \circ h = \id_H$, we obtain the morphism of small extensions 
\begin{equation} \label{eq extension for O2 d}
\xymatrix{
0 \ar[r] & H \ar[r] \ar@{=}[d] & B \ar[r]^{\tau} \ar[d]_{\beta'} & A \ar[r] \ar[d]^{\alpha} & 0
\\
0 \ar[r] & H \ar[r] & \overline{B}' \ar[r]^{\overline{\tau}'} & A' \ar[r] & 0.
}
\end{equation}
Let us consider the morphisms of schemes associated to the morphism of algebras appearing in \eqref{eq extension for O2 c} and \eqref{eq extension for O2 d}. Thanks to the commutativity of \eqref{eq extension for O2 c} one has the following commuting diagram of morphisms between schemes,
\[
\xymatrix{
X_{A'} \ar@{^(->}[dd]_{p'} \ar@{^(->}[rd]^{\overline{p}'} \ar[rr]^{p_\alpha} & & X_A \ar@{^(->}[dd]^{p}
\\
& X_{\overline{B}'} \ar@{^(->}[ld]^{p_{\overline{\beta}'}} \ar[rd]^{p_{\beta'}}   & 
\\
X_{B'} \ar[rr]_{p_\beta} & & X_B,
}
\]
whose right-upper subdiagram 
\begin{equation}  \label{eq Cartesian diagram for schemes}
\xymatrix{
X_{A'} \ar@{^(->}[d]_{\overline{p}'} \ar[r]^{p_\alpha}  & X_A \ar@{^(->}[d]^{p}
\\
X_{\overline{B}'}  \ar[r]_{p_{\beta'}} & X_B   
}
\end{equation}
is Cartesian.

Before checking $\mathbf{O2}$ restricted to morphisms of small extensions of the form \eqref{eq extension for O2 b}, we will study its compatibility with those of the form \eqref{eq extension for O2 d}.

\begin{proposition} \label{pr O2 c}
Consider the morphism of small extensions \eqref{eq extension for O2 d}. Given $(\Ee_A, \Phi_A, \gamma_A) \in \Def^{\, \pm}_{(E,\phi)}(A)$, one has that 
\[
\Omega^\pm_{\overline{\tau}'}\left ( p_\alpha^* (\Ee_A, \Phi_A, \gamma_A) \right ) = \Omega^\pm_\tau(\Ee_A, \Phi_A, \gamma_A)).
\]
\end{proposition}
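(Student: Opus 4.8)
The plan is to run the construction of $\Omega^\pm_{\overline{\tau}'}$ on the pull-back $p_\alpha^*(\Ee_A,\Phi_A,\gamma_A)$ using resolution data that is itself pulled back from the data chosen to compute $\Omega^\pm_\tau(\Ee_A,\Phi_A,\gamma_A)$, and to check stage by stage that every object produced is the pull-back (along $p_\alpha$, resp.\ $p_{\beta'}$) of the corresponding object in the $\Omega^\pm_\tau$-construction; since the two final answers — a $2$-extension \eqref{eq getting 2-extension} and a class $[\mu]$ as in \eqref{eq definition of mu} — live over the common closed fibre $X$, on which $p_\alpha$ and $p_{\beta'}$ restrict to the identity, they then coincide. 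We are entitled to make this choice of resolution because, as noted after \eqref{eq definition of mu}, the class $\Omega^\pm$ does not depend on it. So I would set $\Ww_{A'}^\bullet := p_\alpha^*\Ww_A^\bullet$ and $\Ww_{\overline{B}',0}:=p_{\beta'}^*\Ww_{B,0}$ and first verify admissibility: the terms $\Ww_{A,i}$ and the homology $\Ee_A$ are flat over $A$, so $(-)\otimes_A A'$ keeps $\Ww_{A'}^\bullet$ a locally free resolution of $\Ee_{A'}=p_\alpha^*\Ee_A$; the restriction to $X$ of each of $\Ww_{A',0}$, $\Ww_{\overline{B}',0}$, $\Ww_{B,0}$, $\Ww_{A,0}$ is the same $W_0$, so the vanishings $\H^{>0}(X_{A'},\Ww_{A',0})=0$ and $\H^{>0}(X_{\overline{B}'},\Ww_{\overline{B}',0})=0$ follow from those over $X_A$, $X_B$ by filtering by powers of the maximal ideals; and $\Ww_{\overline{B}',0}|_{X_{A'}}\cong\Ww_{A',0}$, since the structural map $B\to A'$ factors through $A$ by the commutativity of \eqref{eq extension for O2 d}.

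Next I would treat the sheaf part. Both small extensions have the same kernel $H$. The diagram \eqref{eq diagram to be filled} for $X_{\overline{B}'}$, built by the same recipe from $\Ww_{\overline{B}',0}$, is the base change along $p_{\beta'}$ of the one for $X_B$ — this uses the local freeness of $\Ww_{B,0}$, the structure sequence \eqref{eq ses structural sheaves} (which has the same $H$ for $B$ and $\overline B'$), and the Cartesianness of \eqref{eq Cartesian diagram for schemes} to identify the base change of $p_*\Ww_{A,0}$ with $\overline{p}'_*\Ww_{A',0}$ — so $\Ff_{\overline{B}'}=p_{\beta'}^*\Ff_B$ and the extensions \eqref{eq ses obs th}, \eqref{eq ses obs th in X_A} and the $2$-extension \eqref{eq 2-ext obs th in X_A} over $X_{\overline{B}'}$/$X_{A'}$ are the base changes of their $X_B$/$X_A$ counterparts. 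Restricting \eqref{eq 2-ext obs th in X_A} to $X$ then gives literally the same $2$-extension \eqref{eq getting 2-extension}, because $p_\alpha|_X=\id$ and the induced isomorphism on $X$ is $\gamma_A$ on both sides. (Alternatively, this part is exactly the classical functoriality \textbf{O2} of the obstruction to deforming the coherent sheaf $\Ee_A$, transported through $\Ext^2_{X_A}(\Ee_A,H\otimes_k p_{A,*}E)\cong\Ext^2_X(E,H\otimes_k E)$.)

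Then I would do the same for the quadratic datum: choose as representative on $X_{\overline{B}'}$ the pull-back $p_{\beta'}^*\Upsilon$ of the representative $\Upsilon$ used on $X_B$. Since $(\pi_{A'}\otimes\pi_{A'})^*\Phi_{A'}=p_\alpha^*\big((\pi_A\otimes\pi_A)^*\Phi_A\big)$, this does represent the class in \eqref{eq class of Upsilon} prescribed by the construction applied to $p_\alpha^*(\Ee_A,\Phi_A,\gamma_A)$. All the vanishings \eqref{eq Upsilon of our subspace contained in I otimes Oo_X}, \eqref{eq restriction of nu 1}, \eqref{eq restriction of nu 2} used to descend $\Upsilon$ to $\Upsilon_\Ff$ are preserved by the pull-back, so $(p_{\beta'}^*\Upsilon)_\Ff=p_{\beta'}^*(\Upsilon_\Ff)$; passing to the associated $\Oo_{X_{A'}}$-morphism and restricting to $X$ recovers the same $\mu$ of \eqref{eq definition of mu}. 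As the additive ambiguity \eqref{eq up to the additive action of} in $\Upsilon$ also pulls back, the classes $[\mu]$ on the two sides coincide, and together with the previous paragraph this yields the claimed equality $\Omega^\pm_{\overline{\tau}'}(p_\alpha^*(\Ee_A,\Phi_A,\gamma_A))=\Omega^\pm_\tau(\Ee_A,\Phi_A,\gamma_A)$.

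The step I expect to be the main obstacle is the base-change bookkeeping in the second and third paragraphs: $p_{\beta'}:X_{\overline{B}'}\hookrightarrow X_B$ is a non-flat closed immersion, so $p_{\beta'}^*$ need not preserve exactness, and one must check in each individual instance — the columns and rows of \eqref{eq diagram to be filled}, the sequence \eqref{eq existence of Upsilon}, the descent of $\Upsilon$ — that the relevant natural base-change maps are isomorphisms, invoking as needed the flatness of $\Ww_{B,0}$ over $\Oo_{X_B}$, the flatness of $\Ee_A$ over $A$, the explicit form $H\otimes_k(-)$ of the kernels together with $\mM_B H=0$, and the equality $A'=A\otimes_B\overline{B}'$ encoded in \eqref{eq Cartesian diagram for schemes}. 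Once these compatibilities are established the equality of the two obstruction classes is immediate, since both the $2$-extension and the class $[\mu]$ are already objects over $X$.
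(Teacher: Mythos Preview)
Your proposal is correct and follows essentially the same route as the paper: both set $\Ww_{A'}^\bullet:=p_\alpha^*\Ww_A^\bullet$ and $\Ww_{\overline{B}',0}:=p_{\beta'}^*\Ww_{B,0}$, use the Cartesianness of \eqref{eq Cartesian diagram for schemes} to deduce $\overline{\Ff}'\cong p_{\beta'}^*\Ff$ (hence the same $2$-extension on $X$), and then take $p_{\beta'}^*\Upsilon$ as the representative on $X_{\overline{B}'}$ to conclude $\overline{\mu}'\cong\mu$. If anything, you are more explicit than the paper about the admissibility checks and the base-change bookkeeping that you flag as the main obstacle; the paper treats these tersely but relies on exactly the same ingredients.
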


\begin{proof}
We study $\Omega^\pm_{\overline{\tau}'}( p_{\alpha}^{\, *}(\Ee_A, \Phi_A, \gamma_A))$. Consider the locally free resolution $\Ww_A^\bullet \stackrel{\pi_A}{\longrightarrow} \Ee_A \to 0$ and take its pull-back $p_\alpha^*\Ww_A^\bullet \stackrel{p_\alpha^*\pi_A}{\longrightarrow} p_\alpha^*\Ee_A \to 0$ which is obviously a locally free resolution of $p_\alpha^*\Ee_A$. One can choose $\Ww_{B,0}$ and $\Ww_{\overline{B}',0} := p_{\beta'}^*\Ww_{B,0}$ satisfying in both cases the cohomology vanishing \eqref{eq cohomology vanishing for W_B} for $i>0$.

Recall that the morphism
\[
\overline{\rho}' : \Ww_{\overline{B}',0} = p_{\beta'}^*\Ww_{B,0} \longrightarrow \overline{p}'_*\Ww_{A',0} = \overline{p}'_*p_\alpha^* \Ww_{A,0},
\]
is given by the restriction to $X_{A'}$. Since $p_{\beta'}|_{X_{A'}} = p_\alpha$, $\overline{p}'|_{X_{A'}} = \id_{X_{A'}}$ and $p|_{X_A} = \id_{X_A}$, one has that 
\[
\left . \overline{p}'_*p_\alpha^* \Ww_{A,0} \right |_{X_{A'}} = \left . p_{\beta'}^* p_* \Ww_{A,0} \right |_{X_{A'}}.
\]
By the Cartesianity of \eqref{eq Cartesian diagram for schemes}, it follows that $\overline{p}'_*p_\alpha^* \Ww_{A,0}$ is supported over $X_{A'}$. By all of the above, one has that $\overline{\rho}' \cong p_{\beta'}^*\rho$ and this implies that
\begin{equation} \label{eq description of ol Ff'}
\overline{\Ff}' \cong p_{\beta'}^* \Ff.
\end{equation}
Hence $\overline{F}' := ( \overline{\Ff}')^{A'}|_X$ is isomorphic to $F$ and fits in the $2$-extension \eqref{eq getting 2-extension}.

We now move forward from the obstruction theory of sheaves. Recall that we chose $\Ww_{B,0}$ and $\Ww_{\overline{B}',0} := p_{\beta'}^*\Ww_{B,0}$ in such a way that both satisfy the cohomology vanishing \eqref{eq cohomology vanishing for W_B} for $i>0$. Let $\left [ \Upsilon \right ]$ be the class in \eqref{eq class of Upsilon} determined by $(\pi_A \otimes \pi_A)^*\Phi_A$ and note that $p_{\beta'}^*\Upsilon$ is a representant of the corresponding class determined to $(\pi_{A'} \otimes \pi_{A'})^*p_\alpha^*\Phi_A$. After this and \eqref{eq description of ol Ff'}, it follows that 
\[
\overline{\mu}': = \left . \left ( p_{\beta'}^* \Upsilon \right )^{A'}_{p_{\beta'}^*\Ff} \right |_X \cong \left .  p_{\alpha}^* \left (\Upsilon^{A}_{\Ff} \right ) \right |_X \cong \mu,
\]
where $\mu$ is defined in \eqref{eq definition of mu}. This concludes the proof, as $\mu$ and the $2$-extension \eqref{eq getting 2-extension} determine  $\Omega^\pm_{\tau}(\Ee_A, \Phi_A, \gamma_A)$ as well.
\end{proof}

We now check that the set of maps defined in \eqref{eq obs morphism for Def^+} and \eqref{eq obs morphism for Def^-} satisfy condition $\mathbf{O2}$ restricted to morphisms of small extensions of the form \eqref{eq extension for O2 b}. 

\begin{proposition} \label{pr O2 b}
Consider the morphism of small extensions \eqref{eq extension for O2 b}. Given $(\Ee_A, \Phi_A, \gamma_A) \in \Def^{\, \pm}_{(E,\phi)}(A)$, one has that 
\[
\Omega^\pm_{\tau'}\left ( p_\alpha^* (\Ee_A, \Phi_A, \gamma_A) \right ) = (h \otimes \id_{\HH^2}) \circ \Omega^\pm_\tau(\Ee_A, \Phi_A, \gamma_A)).
\]
\end{proposition}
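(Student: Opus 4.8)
The plan is to reduce everything to the already-established Propositions \ref{pr O2 a}, \ref{pr O2 c} and \ref{pr O1}, using the decompositions recorded in \eqref{eq extension for O2 c} and \eqref{eq extension for O2 d}. Write $\HH^2$ for $\HH^2(\Delta^{\pm,\bullet}_{(E,\phi)})$ and use the splitting \eqref{eq decomposition of H'}, $H' = H\oplus H''$, which gives $H'\otimes_k\HH^2 = (H\otimes_k\HH^2)\oplus(H''\otimes_k\HH^2)$. Denote by $\overline{h}'\colon H'\to H$ and $h''\colon H'\to H''$ the two projections; recall $\overline{h}'\circ h = \id_H$ and note $h''\circ h = 0$. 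Since $h\otimes\id_{\HH^2}$ identifies $H\otimes_k\HH^2$ with the first summand, the proposition will follow once I show that the class $\Omega^\pm_{\tau'}(p_\alpha^*(\Ee_A,\Phi_A,\gamma_A))\in H'\otimes_k\HH^2$ has first component equal to $\Omega^\pm_\tau(\Ee_A,\Phi_A,\gamma_A)$ and vanishing second component.

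For the first component I would argue as follows. The lower morphism of small extensions in \eqref{eq extension for O2 c}, namely $(\overline{h}',\overline{\beta}',\id_{A'})\colon\tau'\to\overline{\tau}'$, is of the type \eqref{eq extension for O2 a} (with $\overline{B}' = B'/\ker\overline{h}' = B'/H''$), so Proposition \ref{pr O2 a}, applied to the element $p_\alpha^*(\Ee_A,\Phi_A,\gamma_A)\in\Def^{\, \pm}_{(E,\phi)}(A')$, gives
\[
(\overline{h}'\otimes\id_{\HH^2})\circ\Omega^\pm_{\tau'}(p_\alpha^*(\Ee_A,\Phi_A,\gamma_A)) = \Omega^\pm_{\overline{\tau}'}(p_\alpha^*(\Ee_A,\Phi_A,\gamma_A)).
\]
On the other hand $\overline{\tau}'$ together with the morphism $(\id_H,\beta',\alpha)$ is precisely the morphism of small extensions \eqref{eq extension for O2 d}, so Proposition \ref{pr O2 c} identifies the right-hand side with $\Omega^\pm_\tau(\Ee_A,\Phi_A,\gamma_A)$. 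This pins down the first component, since $\overline{h}'\otimes\id_{\HH^2}$ is exactly the projection onto the first summand.

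For the second component I would use $h''$ in place of $\overline{h}'$: the projection $h''\colon H'\to H''$ and its induced quotient yield a morphism of type \eqref{eq extension for O2 a} from $\tau'$ to a small extension $\overline{\tau}''\colon 0\to H''\to \overline{B}'' := B'/H\to A'\to 0$, and Proposition \ref{pr O2 a} shows that the second component of $\Omega^\pm_{\tau'}(p_\alpha^*(\Ee_A,\Phi_A,\gamma_A))$ equals $\Omega^\pm_{\overline{\tau}''}(p_\alpha^*(\Ee_A,\Phi_A,\gamma_A))$. It remains to check that this obstruction vanishes; by Proposition \ref{pr O1} this is the same as producing a lift of $p_\alpha^*(\Ee_A,\Phi_A,\gamma_A)$ to $\Def^{\, \pm}_{(E,\phi)}(\overline{B}'')$. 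Here I would use the commutativity of \eqref{eq extension for O2 b}: because $\beta|_H = h = \id_H\oplus 0$ by \eqref{eq decomposition of h}, the composite $B\to B'\to B'/H = \overline{B}''$ annihilates $H\subset B$ and hence descends to a ring homomorphism $s\colon A = B/H\to\overline{B}''$; from $\tau'\circ\beta = \alpha\circ\tau$ and surjectivity of $\tau$ one then gets $\overline{\tau}''\circ s = \alpha$. Consequently $p_s^*(\Ee_A,\Phi_A,\gamma_A)\in\Def^{\, \pm}_{(E,\phi)}(\overline{B}'')$ restricts along the closed immersion $X_{A'}\hookrightarrow X_{\overline{B}''}$ attached to $\overline{\tau}''$ to $p_\alpha^*(\Ee_A,\Phi_A,\gamma_A)$, so it is the desired lift and $\Omega^\pm_{\overline{\tau}''}(p_\alpha^*(\Ee_A,\Phi_A,\gamma_A)) = 0$.

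Combining the two computations, $\Omega^\pm_{\tau'}(p_\alpha^*(\Ee_A,\Phi_A,\gamma_A)) = (h\otimes\id_{\HH^2})\circ\Omega^\pm_\tau(\Ee_A,\Phi_A,\gamma_A)$, which is the assertion, uniformly in the sign. I expect the only step that is not pure bookkeeping to be the vanishing of the $H''$-component: the crucial point is that the normalisation $h = \id_H\oplus 0$ of \eqref{eq decomposition of h} is exactly what forces $B\to B'/H$ to factor through $A$, producing the section $s$ and, via Proposition \ref{pr O1}, the lift that annihilates the obstruction. Everything else is formal manipulation with the diagrams \eqref{eq extension for O2 c}--\eqref{eq extension for O2 d} and the previously proved propositions.
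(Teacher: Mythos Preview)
Your argument is correct and genuinely different from the paper's. The paper proceeds by a direct computation on both sides: it first works out what $(h\otimes\id_{\HH^2})$ does to the explicit data (the $2$-extension \eqref{eq getting 2-extension} and the morphism $\mu$), obtaining the direct-sum description \eqref{eq h circ zeta}--\eqref{eq h circ mu}; then it builds the resolutions for $\Omega^\pm_{\tau'}(p_\alpha^*(\Ee_A,\Phi_A,\gamma_A))$ from scratch, produces a splitting $(\Ff')^{A'}\cong p_{\overline{\beta}',*}(\overline{\Ff}')^{A'}\oplus(H''\otimes_k p_{A',*}E)$, and compares the two explicit descriptions, invoking Proposition~\ref{pr O2 c} only at the last step. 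Your route instead stays at the level of the obstruction maps: you split the target along $H'=H\oplus H''$ and compute each component by functoriality, using Proposition~\ref{pr O2 a} twice (for the quotients $B'/H''$ and $B'/H$) and Proposition~\ref{pr O2 c} for the $H$-part. The new idea you contribute is the treatment of the $H''$-component: the observation that $\beta$ descends to a ring map $s\colon A\to B'/H$ with $\overline{\tau}''\circ s=\alpha$ yields an explicit lift, so Proposition~\ref{pr O1} kills that component. This is shorter and more conceptual than the paper's sheaf-level splitting; the paper's computation, on the other hand, makes visible exactly how the obstruction data decomposes. One small point worth recording explicitly in your write-up is that $H\subset H'$ is an ideal of $B'$ (any $k$-subspace of $H'$ is, since $\mM_{B'}H'=0$), so that $B'/H$ is again in $\Art$ and $\overline{\tau}''$ is a genuine small extension.
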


\begin{proof}
We first describe $(h \otimes \id_{\HH^2}) \circ \Omega^\pm_\tau(\Ee_A, \Phi_A, \gamma_A)$, where we recall that $\Omega^\pm_\tau(\Ee_A, \Phi_A, \gamma_A)$ is determined by the $2$-extension \eqref{eq getting 2-extension} and $\mu$ given in \eqref{eq definition of mu}. Recall as well that $H'$ and $h$ decompose as indicated in \eqref{eq decomposition of H'} and \eqref{eq decomposition of h}. In that case, observe that $h \otimes \id_{\HH^2}$ applied to \eqref{eq getting 2-extension} gives
\begin{equation} \label{eq h circ zeta}
0 \to (H \otimes_k E) \oplus (H'' \otimes_k E) \to (F \otimes_k E) \oplus (H'' \otimes_k E) \to W_0 \to E \to 0.
\end{equation}
Also, note that the image of $\mu$ under $(h \otimes \id_{\HH^2})$ decomposes in two direct summands. The first summand is $\mu$ and the second is the zero element, which corresponds to $\id_{H''} \otimes_k \phi(\pi \otimes \id_E) : W_0 \otimes (H'' \otimes_k E) \to H'' \otimes_k \Oo_X$ after \eqref{eq mu and nu} and \eqref{eq condition for mu to lift}. Then, the image of $\mu$ under $h \otimes_k \id_{\HH^2}$ is 
\begin{equation} \label{eq h circ mu}
\mu \oplus \left( \id_{H''} \otimes_k \phi(\pi \otimes \id_E) \right ) : W_0 \otimes \left ( (F \otimes_k E) \oplus (H'' \otimes_k E) \right ) \longrightarrow (H \oplus H'') \otimes_k \Oo_X.
\end{equation}
 
Let us now study $\Omega^\pm_{\tau'}( p_{\alpha}^{\, *}(\Ee_A, \Phi_A, \gamma_A))$. We consider $\Ww_{A',i}$, $\Ww_{B',0}$ and the morphisms $\rho' : \Ww_{B',0} \to p'_*\Ww_{A',0}$ as we did in the beginning of this section. Set as well 
\begin{equation} \label{eq Ww_B' restricted to ol B'}
\Ww_{B',0} |_{X_{\overline{B}'}} = \Ww_{\overline{B}',0}.
\end{equation}
and 
\begin{equation} \label{eq rho' restricted to ol B'}
\rho' |_{X_{\overline{B}'}} = \overline{\rho}' : \Ww_{\overline{B}',0} \to \overline{p}'_*\Ww_{A',0} = p'_*\Ww_{A',0}|_{X_{\overline{B}'}}.
\end{equation}
Defining $\Ff'$ and $\overline{\Ff}'$ as in \eqref{eq def of Ff}, it follows from \eqref{eq Ww_B' restricted to ol B'} and \eqref{eq rho' restricted to ol B'} that 
\begin{equation} \label{eq Ff' restricted to ol B'}
\Ff' |_{X_{\overline{B}'}} = \overline{\Ff}'.
\end{equation}
From the description of $\Ff'$ that we obtain from \eqref{eq ses obs th} one can obtain the following short exact sequence of $\Oo_{X_{B'}}$-modules
\[
0 \longrightarrow H'' \otimes_k E \longrightarrow \Ff' \longrightarrow p_{\overline{\beta}',*} \overline{\Ff}' \longrightarrow 0,
\]
that gives rise to the short exact sequence of $\Oo_{X_{A'}}$-modules
\begin{equation} \label{eq ses for Ff' and ol Ff'}
0 \longrightarrow H'' \otimes_k p_{A', *}E \longrightarrow (\Ff')^{A'} \longrightarrow p_{\overline{\beta}',*} (\overline{\Ff}')^{A'} \longrightarrow 0.
\end{equation}
We see that \eqref{eq Ff' restricted to ol B'} provides naturally a splitting of \eqref{eq ses for Ff' and ol Ff'}, so
\begin{equation} \label{eq decomposition of Ff'}
(\Ff')^{A'} \cong p_{\overline{\beta}',*} (\overline{\Ff}')^{A'} \oplus \left ( H'' \otimes_k p_{A', *}E \right ).
\end{equation}
Hence, the $2$-extension determined by $\Omega^\pm_{\tau'}( p_{\alpha}^{\, *}(\Ee_A, \Phi_A, \gamma_A))$ is
\begin{equation} \label{eq zeta'}
0 \to (H \otimes_k E) \oplus (H'' \otimes_k E) \to \left (\overline{F}' \otimes_k E \right ) \oplus (H'' \otimes_k E) \to W_0' \to E \to 0,
\end{equation}
where $\overline{F}'$ denotes $(\overline{\Ff}')^{A'}|_X$ and $W_0'$ is the restriction to $X$ of $\Ww_{A',0}$.

Only at this point, we find ourselves in a position to give a step forward from the classical case of sheaves. One has that \eqref{eq existence of Upsilon} also holds over $X_{B'}$ and $X_{\overline{B}'}$. Therefore $(\pi_A \otimes \pi_A)^*\Phi_A$ defines classes in the corresponding spaces \eqref{eq class of Upsilon} defined over $X_{B'}$ and $X_{\overline{B}'}$. Note also that one can choose representants $\Upsilon' \in \Hom_{X_{B'}}(\Ww_{B',0} \otimes \Ww_{B',0}, \Oo_{B'})^\pm$ and $\overline{\Upsilon}' \in \Hom_{X_{\overline{B}'}}(\Ww_{\overline{B}',0} \otimes \Ww_{\overline{B}',0}, \Oo_{\overline{B}'})^\pm$ satisfying
\begin{equation} \label{eq description of ol Upsilon' a}
\Upsilon'|_{X_{\overline{B}'}} = \overline{\Upsilon}'.
\end{equation}
Denote by $\mu'$ and $\overline{\mu}'$ the maps defined in \eqref{eq definition of mu} out of $\Upsilon'$ and $\overline{\Upsilon}'$. It then follows from \eqref{eq nu over V_i}, \eqref{eq decomposition of Ff'} and \eqref{eq description of ol Upsilon' a} that
\begin{equation} \label{eq mu'}
\mu' = \overline{\mu}' \oplus \left( \id_{H''} \otimes_k \phi(\pi \otimes \id_E) \right ).
\end{equation}

The result follows from Proposition \ref{pr O2 c} after comparing \eqref{eq h circ zeta} with \eqref{eq zeta'} and \eqref{eq h circ mu} with \eqref{eq mu'}.
\end{proof}

The following summarizes all the previous results in this section.

\begin{theorem}
Let $(E,\phi)$ be an orthogonal ({\it resp.} symplectic) sheaf over the projective scheme $X$. Then the deformation functor $\Def^{\, +}_{(E,\phi)}$ ({\it resp.} symplectic) admits an obstruction theory with vector space
\[
\Obs\left ( \Def^{\, \pm}_{(E,\phi)} \right) = \HH^2 \left ( \Delta^{\pm,\bullet}_{(E,\phi)} \right ).
\]
\[
\]
Therefore, $\Def^{\, \pm}_{(E,\phi)}$ are formally smooth when $\HH^2 \left ( \Delta^{\pm,\bullet}_{(E,\phi)} \right ) = 0$. 
\end{theorem}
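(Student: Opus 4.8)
The plan is to assemble the results of this section: I will show that the maps $\Omega^{\pm}_\tau$ constructed above, together with the $k$-vector space $\HH^2\left(\Delta^{\pm,\bullet}_{(E,\phi)}\right)$, constitute an obstruction theory for $\Def^{\,\pm}_{(E,\phi)}$ in the sense of Section \ref{sc deformation and obstruction theory}; that is, they satisfy \textbf{O1} and \textbf{O2}. The construction carried out above already shows that $\Omega^{\pm}_\tau$ is well defined, independent of the auxiliary locally free resolution satisfying \eqref{eq cohomology vanishing for W_B}, so only \textbf{O1} and \textbf{O2} remain, after which the assertion on formal smoothness follows formally.

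Condition \textbf{O1} asks for exactness in the middle of
\[
\Def^{\,\pm}_{(E,\phi)}(B) \longrightarrow \Def^{\,\pm}_{(E,\phi)}(A) \stackrel{\Omega^{\pm}_\tau}{\longrightarrow} H\otimes_k \HH^2\left(\Delta^{\pm,\bullet}_{(E,\phi)}\right),
\]
and this is precisely Proposition \ref{pr O1}: a triple $(\Ee_A,\Phi_A,\gamma_A)$ is annihilated by $\Omega^{\pm}_\tau$ if and only if it lifts to some $(\Ee_B,\Phi_B,\gamma_B) \in \Def^{\,\pm}_{(E,\phi)}(B)$.

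For \textbf{O2} I would invoke the factorization recalled just before Proposition \ref{pr O2 a}: every morphism of small extensions factors as a morphism of the form \eqref{eq extension for O2 a} followed by one of the form \eqref{eq extension for O2 b}. Since the naturality square in \textbf{O2} is functorial under composition — if it commutes for two composable morphisms of small extensions it commutes for their composite, using functoriality of $(-)\otimes_k\id_{\HH^2}$, i.e. $(h_2\circ h_1)\otimes\id_{\HH^2} = (h_2\otimes\id_{\HH^2})\circ(h_1\otimes\id_{\HH^2})$ — it suffices to handle the two types separately. The first is Proposition \ref{pr O2 a}, which gives $\Omega^{\pm}_{\overline\tau}(\Ee_A,\Phi_A,\gamma_A) = (\overline h\otimes\id_{\HH^2})\circ\Omega^{\pm}_\tau(\Ee_A,\Phi_A,\gamma_A)$; the second is Proposition \ref{pr O2 b} (which itself relies on Proposition \ref{pr O2 c}), which gives $\Omega^{\pm}_{\tau'}(p_\alpha^{*}(\Ee_A,\Phi_A,\gamma_A)) = (h\otimes\id_{\HH^2})\circ\Omega^{\pm}_\tau(\Ee_A,\Phi_A,\gamma_A)$. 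Composing these two identities along the factorization yields \textbf{O2} for an arbitrary morphism of small extensions.

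Finally, if $\HH^2\left(\Delta^{\pm,\bullet}_{(E,\phi)}\right)=0$ then, for every small extension $0\to H\to B\stackrel{\tau}{\to}A\to 0$, the target $H\otimes_k\HH^2\left(\Delta^{\pm,\bullet}_{(E,\phi)}\right)$ vanishes, so $\Omega^{\pm}_\tau$ is identically zero; exactness in the middle from \textbf{O1} then forces $\Def^{\,\pm}_{(E,\phi)}(\tau)$ to be surjective. As every surjection in $\Art$ is a composite of small extensions, this makes $\Def^{\,\pm}_{(E,\phi)}$ formally smooth. All the substance has already been established in Propositions \ref{pr O1}, \ref{pr O2 a}, \ref{pr O2 c} and \ref{pr O2 b}, so this theorem is essentially bookkeeping; the only point that merits any care is checking that the \textbf{O2} diagram is genuinely compatible with composition of morphisms of small extensions, which is a routine diagram chase.
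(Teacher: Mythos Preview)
Your proposal is correct and follows essentially the same approach as the paper: the paper's proof is a single sentence stating that the theorem is a consequence of Propositions \ref{pr O1}, \ref{pr O2 a} and \ref{pr O2 b}, and you simply spell out how these propositions combine (including the factorization of morphisms of small extensions already discussed before Proposition \ref{pr O2 a}) and make explicit the standard deduction of formal smoothness from vanishing obstructions.
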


\begin{proof}
The theorem is a consequence of Propositions \ref{pr O1}, \ref{pr O2 a} and \ref{pr O2 b}.
\end{proof}

\end{document}